\title{Projective homogeneous varieties birational to quadrics}
\author{Mark MacDonald}
\begin{document}

\newtheorem{thm}{Theorem}[section]
\newtheorem{lem}[thm]{Lemma}
\newtheorem{prop}[thm]{Proposition}
\newtheorem{cor}[thm]{Corollary}
\newtheorem{claim}[thm]{Claim}
\theoremstyle{definition}
\newtheorem{defn}[thm]{Definition}
\theoremstyle{remark}
\newtheorem{remark}[thm]{\textbf{Remark}}
\newtheorem{ex}[thm]{\textbf{Example}}

\newtheoremstyle{blank}{2mm}{0mm}{\normalsize}{0mm}{\textbf}{ }{0mm}{\thethm}
\theoremstyle{blank}
\newtheorem{blank}[thm]{}

\renewcommand{\thesubsubsection}{\thethm}

\maketitle

\begin{abstract}
We will consider an explicit birational map between a quadric and
the projective variety $X(J)$ of traceless rank one elements in a
simple reduced Jordan algebra $J$. $X(J)$ is a homogeneous $G$-variety for the automorphism group
$G=\textup{Aut}(J)$. We will show that the birational map is a
blow up followed by a blow down. This will allow us to use the
blow up formula for motives together with Vishik's work on the
motives of quadrics to give a motivic decomposition of $X(J)$.
\end{abstract}

Recently Totaro has solved the birational classification problem for a large class of quadrics \cite{To08}. In particular, let $\phi$ be an $r$-Pfister form over a field $k$ of characteristic not 2, and $b=\langle b_1,\cdots b_n \rangle$ be a non-degenerate quadratic form with $n\geq 2$. \begin{prop}\label{TotaroBirat}\cite[Thm.\ 6.3]{To08} The birational class of the quadric defined by $$q=\phi \otimes \langle b_1, \cdots, b_{n-1} \rangle \perp \langle b_n \rangle$$ only depends on the isometry classes of $\phi$ and $\phi \otimes b$, and \textit{not} on the choice of diagonalization of $b$.\end{prop}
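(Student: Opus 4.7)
The plan is to reduce the statement, via Witt's chain-equivalence theorem for diagonalizations, to a single elementary move: any two diagonalizations of $b$ are related by a sequence of isometries, each modifying only two consecutive diagonal entries $\langle b_i,b_{i+1}\rangle$. Moves that touch only the first $n-1$ slots leave $q$ genuinely isometric, because $\phi\otimes\langle b_i,b_{i+1}\rangle$ is simply replaced by the isometric $\phi\otimes\langle c_i,c_{i+1}\rangle$. The nontrivial case is a move on the pair $\langle b_{n-1},b_n\rangle$ that straddles the tensored and untensored parts of $q$.

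In this case the relevant summand of $q$ becomes $r = \phi\otimes\langle b_{n-1}\rangle\perp\langle b_n\rangle$, to be compared with $r' = \phi\otimes\langle c_{n-1}\rangle\perp\langle c_n\rangle$. Both sit inside $\phi\otimes\langle b_{n-1},b_n\rangle$, and I would observe that the latter is similar to the $(r{+}1)$-fold Pfister form $\psi := \phi\otimes\langle 1,b_{n-1}b_n\rangle$. Since $b_{n-1}b_n$ equals $c_{n-1}c_n$ modulo squares, $\psi$ is independent of the chosen diagonalization. As $\dim r = \dim r' = 2^r+1 > 2^r = \tfrac12\dim\psi$, both $r$ and $r'$ are Pfister neighbors of scalar multiples of the same Pfister form $\psi$, and the classical birational theory of Pfister neighbors (Knebusch) relates their quadrics to the common Pfister quadric $Q_\psi$.

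To finish, one must promote this ``core'' birational relation to a birational equivalence between the full quadrics $Q_q$ and $Q_{q'}$, which agree outside the summand $r$ versus $r'$ over the common piece $\phi\otimes\langle b_1,\ldots,b_{n-2}\rangle$. The main obstacle is precisely this promotion step: the additive structure of quadratic forms does not interact transparently with birational equivalence of their associated quadrics, so one likely needs an explicit rational map, or a realization of $Q_q$ as a birational projective bundle (or similar fibered model) over $Q_\psi$ whose total-space structure manifestly depends only on $\phi$ and on $\phi\otimes b$. A separate but related issue is to strengthen diagonalization-invariance to the full statement that two forms $b,b'$ with $\phi\otimes b\cong\phi\otimes b'$ produce birationally equivalent quadrics, which would require exploiting annihilator relations for $\phi$ in the Witt ring.
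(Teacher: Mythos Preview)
Your reduction via Witt chain-equivalence to a single ``transposition'' move on $\langle b_{n-1},b_n\rangle$ is exactly Totaro's strategy, as the paper records in \S\ref{ChainSect}. But your argument stops precisely at the hard step, and you say so yourself: the ``promotion'' of a birational equivalence between the quadrics of the cores $r=\phi\otimes\langle b_{n-1}\rangle\perp\langle b_n\rangle$ and $r'$ to one between the full quadrics $Q_q$ and $Q_{q'}$ is a genuine gap, not a technicality. There is no general mechanism for lifting birational equivalence along an orthogonal summand; the orthogonal sum of quadratic forms does not correspond to any fibered construction on projective quadrics. Your Pfister-neighbor observation is correct and pleasant, but it only gives you birationality of the small quadrics $Q_r$ and $Q_{r'}$, which is the $n=2$ case of the proposition, not a stepping stone to larger $n$. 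Totaro resolves the transposition (see the proof of Prop.~\ref{TranspositionMaps}) by writing down an explicit rational map using the \emph{multiplication} in the composition algebra $C$ with norm form $\phi$: roughly $[c_1,\ldots,c_n]\mapsto[c_{n-1}\bar c_1,\ldots,c_{n-1}\bar c_n]$ after swapping the last two coordinates. This exploits the multiplicative structure of $\phi$, not merely its Pfister property, and is what makes the map work across all coordinates at once.

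You also correctly flag the second gap: passing from invariance under rediagonalization of $b$ to dependence only on the isometry class of $\phi\otimes b$. The paper's own alternative argument (Remark~\ref{RemarkAltTotaro} and the proof of Thm.~\ref{SarkisovThm}), valid for $r\le 3$ (and $n=3$ when $r=3$), sidesteps both difficulties at once: the Jordan algebra $J=\textup{Sym}(M_n(C),\sigma_b)$ depends up to isomorphism only on $C$ and on $\phi\otimes b$, and every quadric $Q(J,u)$ is birational to the intrinsic variety $X(J)$ via $v_2$. Thus two choices of $b$ with isometric $\phi\otimes b$ give the same $J$, hence quadrics birational to the same $X(J)$.
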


The Sarkisov program \cite{Co94} predicts that any birational map between quadrics (in fact between any two Mori fibre spaces) factors as a chain of composites of ``elementary links''. In \ref{ChainSect} we will explicitly factor many of Totaro's birational maps into chains of elementary links, and also prove the following theorem.
\begin{thm}\label{SarkisovThm} For $r=0,1,2$ and $n\geq 3$, or $r=3$ and $n=3$, for each of the birational equivalences from Prop.\ \ref{TotaroBirat}, there is a birational map which factors into two elementary links, each of which is the blow up of a reduced subscheme followed by a blow down. Furthermore, if $r\neq 1$ or $\phi$ is not hyperbolic, then the intermediate Mori fibre space of this factorization will be the projective homogeneous variety $X(J)$ of traceless rank one elements in a Jordan algebra $J$. \end{thm}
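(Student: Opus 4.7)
The plan is to construct, for each allowed pair $(r,n)$, an explicit birational map $Q_q \dashrightarrow X(J)$ with a blow-up/blow-down factorization, and then obtain the theorem by performing this construction for two different diagonalizations of $b$, so that the composite factors $Q_q \dashrightarrow Q_{q'}$ through $X(J)$ as a chain of two elementary links. Proposition \ref{TotaroBirat} guarantees that such a composite is indeed a birational map between the two resulting quadrics.

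First, I would set up the Jordan algebra. For $r=0,1,2,3$, the $r$-Pfister form $\phi$ is realized as the norm form of a composition algebra $C$ of dimension $2^r$ (namely $k$, a quadratic étale algebra, a quaternion algebra, or an octonion algebra). Using $b_1,\dots,b_{n-1}$ to twist the standard involution on $M_n(C)$, take $J = H_n(C,\tau)$; for $r=3$ the constraint $n=3$ is forced, corresponding to the exceptional Jordan algebra $H_3(\mathbb{O})$. A direct dimension count gives $\dim X(J) = (n-1)\cdot 2^r - 1 = \dim Q_q$, which is consistent with a birational map between them.

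Second, write down the rational map $\psi:Q_q \dashrightarrow X(J)\subset \mathbb{P}(J_0)$ explicitly by the formula $[v_1:\cdots:v_{n-1}:v_n]\mapsto vv^{*} - \tfrac{\mathrm{tr}(vv^{*})}{n}I$, where $v=(v_1,\dots,v_{n-1})\in C^{n-1}$ and the scaling is dictated by $v_n\in k$. The equation $q(v)=0$ forces the image to land in $X(J)$. The indeterminacy locus is the reduced subquadric $\{v_n=0\}\cap Q_q$, which is defined by $\phi \otimes \langle b_1,\dots,b_{n-1}\rangle$; blowing it up yields $\tilde Q_q \to Q_q$ and resolves $\psi$ to a morphism $\tilde\psi:\tilde Q_q \to X(J)$.

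Third, I would show $\tilde\psi$ is a blow-down by identifying a smooth reduced closed subvariety $Z'\subset X(J)$ (the locus of traceless rank one elements whose ``top row/column'' vanishes, i.e.\ the $X(J')$ for a smaller Jordan algebra $J'$) over which $\tilde\psi$ has an exceptional divisor and is an isomorphism elsewhere. This identifies the link $Q_q \dashleftarrow \tilde Q_q \to X(J)$ as blow-up followed by blow-down of reduced subschemes. Applying the same construction to the permuted diagonalization $b'$ (swapping $b_n$ with some $b_i$) gives a second such link $X(J) \leftarrow \tilde Q_{q'} \dashrightarrow Q_{q'}$. Finally, one checks the Sarkisov/Mori conditions: $X(J)$ is homogeneous and hence a Mori fibre space over $\mathrm{Spec}(k)$, each blow-up contributes $1$ to the relative Picard number, and the contractions are extremal, so each link is elementary.

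The main obstacle will be the third step: correctly pinpointing the smooth reduced subvariety $Z'\subset X(J)$ onto which $\tilde\psi$ contracts and verifying that $\tilde\psi$ is an isomorphism away from the exceptional divisor. This uses the Jordan-algebraic description of rank-one elements (and, in the case $r=3,\,n=3$, the cubic norm form on $H_3(\mathbb{O})$), and is precisely where the constraint on the pairs $(r,n)$ enters: these are exactly the cases where Hermitian matrix Jordan algebras associated to $C$ exist and have a well-behaved rank-one variety.
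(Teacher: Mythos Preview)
Your overall architecture---build a birational map $Q(J,u)\dashrightarrow X(J)$ that is a single Sarkisov link, then apply it twice for two diagonalizations of $b$ and compose through $X(J)\cong X(J')$---is exactly the paper's strategy, and your identification of the base locus on the $X(J)$ side with $X(J_{n-1})$ is correct. But two concrete errors in your second step would derail the argument.

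First, the map. The Veronese-type map used is $[c_1,\dots,c_n]\mapsto [b_ic_i\bar{c}_j]\in\mathbb{P}J$, with $c_n\in k\cdot 1\subset C$; on the quadric $Q(J,u)$ the trace $\sum_i b_ic_i\bar{c}_i+b_nc_n^2$ already vanishes, so the image is a genuine rank-one traceless element of $J$. Your formula $vv^{*}-\tfrac{\mathrm{tr}(vv^{*})}{n}I$ subtracts a scalar matrix and therefore does not land in the rank-one locus $V_J$ at all; it also uses only $v_1,\dots,v_{n-1}$, so the target is an $(n-1)\times(n-1)$ matrix space rather than $J$.

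Second, and more seriously, the indeterminacy locus of the correct map is \emph{not} the hyperplane section $\{c_n=0\}\cap Q(J,u)$. That locus is a Cartier divisor, and blowing it up would be an isomorphism; already for $r=0$ the map $v_2$ is a morphism and the base locus is empty, contradicting your description. The actual base scheme $Z_1$ is cut out by the simultaneous vanishing of \emph{all} entries $c_i\bar{c}_j$, equivalently by $x^2=0$ in the Peirce space $J_{1/2}(u)$, and has large codimension (e.g.\ $\overline{Z_1}\cong\mathbb{P}^{n-2}\sqcup\mathbb{P}^{n-2}$ for $r=1$, $\mathbb{P}^1\times\mathbb{P}^{2n-3}$ for $r=2$, and the $10$-dimensional spinor variety for $r=3$). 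Proving that $Z_1$ is smooth and reduced is the real work here: the paper does it by showing $Z_1$ is a single $\mathrm{Aut}(J,u)$-orbit. Finally, the clause ``$r\neq 1$ or $\phi$ not hyperbolic'' in the statement is precisely the condition that $Z_1$ be \emph{irreducible}, so that the blow-up raises the Picard number by exactly one and $X(J)$ ends up with Picard number $1$; your proposal does not address this.
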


The birational map
from a quadric to $X(J)$ will be the codimension 1 restriction of a
birational map between projective space and the projective variety
$V_J$ of rank one elements of $J$, first written down by Jacobson
\cite[4.26]{Ja85}.

\begin{blank} \textbf{Motivic decompositions.} Let $G$ a semisimple linear algebraic group of inner type, and $X$
a projective homogeneous $G$-variety such that $G$ splits over the
function field of $X$, which is to say, $X$ is generically split
(see \cite[3.6]{PSZ08} for a convenient table). Then \cite{PSZ08}
gives a direct sum decomposition of the Chow motive
$\mathcal{M}(X; \mathbb{Z}/p\mathbb{Z})$ of $X$. They show that it
is the direct sum of some Tate twists of a single indecomposable
motive $\mathcal{R}_p(G)$, which generalizes the Rost motive. This
work unified much of what was previously known about motivic
decompositions of anisotropic projective homogeneous varieties. 
\end{blank}

In the non-generically split cases less is known. Quadrics are in general not generically split, but much is known by the work of Vishik and others, especially in low dimensions \cite{Vi04}.
\begin{thm}(See Thm.\ \ref{Q(J,u)motive}) The motive of the projective quadric defined by the quadratic forms in Prop.\ \ref{TotaroBirat} may be decomposed into the sum, up to Tate twists, of Rost motives and higher forms of Rost motives. \end{thm}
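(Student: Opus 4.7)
The plan is to exploit Theorem~\ref{SarkisovThm} together with the blow up formula for Chow motives and the motivic decomposition of the generically split variety $X(J)$. By Theorem~\ref{SarkisovThm}, each birational equivalence from Proposition~\ref{TotaroBirat} factors as two elementary links $Q \dashrightarrow X(J) \dashrightarrow Q'$, each being the blow up of a reduced subscheme followed by a blow down. Writing $\widetilde Y_1$ for the common smooth model of the first link and $Z_1 \subset Q$, $W_1 \subset X(J)$ for the corresponding reduced centers of codimensions $c_1, d_1$, the blow up formula yields
\[
\mathcal{M}(Q) \oplus \bigoplus_{i=1}^{c_1-1} \mathcal{M}(Z_1)(i) \;\cong\; \mathcal{M}(X(J)) \oplus \bigoplus_{j=1}^{d_1-1} \mathcal{M}(W_1)(j),
\]
together with an analogous isomorphism for the second link.

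Next I would apply \cite{PSZ08} to $X(J)$. Because $X(J)$ is a generically split projective homogeneous variety for $G = \textup{Aut}(J)$, its motive $\mathcal{M}(X(J); \mathbb{Z}/p\mathbb{Z})$ decomposes as a sum of Tate twists of a single indecomposable motive $\mathcal{R}_p(G)$, which generalizes Rost's motive. In the small-rank cases $r \le 2$ and $r=3, n=3$ of Theorem~\ref{SarkisovThm}, one checks using the tables in \cite[3.6]{PSZ08} that $\mathcal{R}_p(G)$ is either the classical Rost motive of $\phi$ or a higher form thereof attached to a Pfister form built from $\phi$ and the $b_i$.

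The crucial step is identifying the blow up centers $Z_1, W_1$ (and their counterparts for the second link) geometrically, using the restriction of Jacobson's rational map \cite[4.26]{Ja85} to the codimension-one subscheme cutting out $Q$. I expect these centers to be either projective spaces or smaller quadrics still of the form in Proposition~\ref{TotaroBirat}, with strictly smaller $n$ or smaller Pfister part. Their motives then decompose into Tate twists of Rost motives and higher forms thereof, either trivially (for projective spaces) or by an induction on $n$, with base case $n=2$ handled by the classical motive of a Pfister neighbour and by Vishik's results \cite{Vi04} in low dimensions.

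Finally, combining these three steps and invoking the Krull--Schmidt property for Chow motives of anisotropic projective homogeneous varieties, one cancels matching summands across the two blow up identities to isolate $\mathcal{M}(Q)$ as a sum of Tate twists of Rost motives and their higher forms. The main obstacle is the explicit step of identifying the centers: without a sharp geometric description of $Z_1, W_1, Z_2, W_2$ the induction on $n$ cannot be closed, and one must check that the centers really do remain in a class of varieties whose motives are controlled by Rost motives and Vishik's decompositions rather than drifting into more exotic projective homogeneous varieties whose motives are not yet understood.
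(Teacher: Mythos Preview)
Your approach has a fundamental error: $X(J)$ is \emph{not} generically split, so \cite{PSZ08} does not apply to it. Indeed, the paper explicitly states in the introduction that the varieties $X(J)$ under consideration are non-generically split, and that the purpose of the paper is to use the motivic decomposition of the quadric (obtained independently) to deduce the decomposition of $\mathcal{M}(X(J))$ --- precisely the opposite direction from what you propose. Your plan to invoke \cite{PSZ08} on $X(J)$ and then pull the decomposition back through the blow-up formula to $Q$ therefore cannot get started.

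The paper's actual proof of this theorem (Theorem~\ref{Q(J,u)motive}) is entirely internal to the theory of quadratic forms and does not use the birational link to $X(J)$ at all. It applies Vishik's criterion (Theorem~\ref{Vish1}): two indecomposable summands of motives of quadrics are isomorphic if the corresponding Witt-index thresholds are equivalent over all field extensions. One checks directly, using elementary properties of Pfister multiples (Lemma~\ref{Vish2}) and the Arason--Pfister Hauptsatz, that the summands $F^r_n$, $F^r_{n-1}\{i\}$ for $1\le i\le 2^r-1$, and (for even $n$) some twists of $R^r$ appear in $\mathcal{M}(q)$, and then a Tate-motive count over $\bar{k}$ shows these exhaust the motive. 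Only \emph{after} this is established does the paper turn around and feed it, together with the blow-up formula and the motive of $Z_1$, into Proposition~\ref{MotiveProp} to obtain $\mathcal{M}(X(J))$ by induction on $n$ and cancellation. Your proposal correctly anticipates the blow-up identity and the inductive structure, but it belongs to the proof of Theorem~\ref{MotiveX(J)}, not to the proof of the present statement.
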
 
In the present paper we will use this knowledge of motives of quadrics to produce motivic decompositions for
the non-generically split projective homogeneous
$G$-varieties $X(J)$ which appear in Thm.\ \ref{SarkisovThm}. The algebraic groups $G$ are of Lie type $^2A_{n-1}$, $C_n$ and
$F_4$, and are automorphism groups of simple reduced Jordan algebras of
degree $\geq 3$. These varieties $X(J)$ come in four different types which we label $r=0,1,2$ or $3$, corresponding to the $2^r$ dimensional composition algebra of the simple Jordan algebra $J$ (see Thm.\ \ref{Aut(J)-orbits} for a description of $X(J)$ as $G/P$ for a parabolic subgroup $P$).
\begin{thm} (See Thm.\ \ref{MotiveX(J)}) The motive of $X(J)$ is the direct sum of a higher form of a Rost motive, $F^r_n$, together with several Tate twisted copies of the Rost motive $R^r$. \end{thm}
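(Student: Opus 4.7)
The plan is to apply the blow up formula for Chow motives to both sides of the elementary link provided by Theorem \ref{SarkisovThm}, and then feed in the motivic decomposition of the quadric from Thm.\ \ref{Q(J,u)motive}.

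First, by Theorem \ref{SarkisovThm} there is a smooth variety $\widetilde{Y}$ which is simultaneously the blow up of the quadric $Q$ (defined by $q$ in Prop.\ \ref{TotaroBirat}) along a smooth centre $Z \subset Q$, and the blow up of $X(J)$ along a smooth centre $W \subset X(J)$. The standard blow up formula applied to both descriptions yields
$$\mathcal{M}(Q) \oplus \bigoplus_{i=1}^{c_Z-1} \mathcal{M}(Z)(i) \;\cong\; \mathcal{M}(X(J)) \oplus \bigoplus_{j=1}^{c_W-1} \mathcal{M}(W)(j),$$
where $c_Z$ and $c_W$ denote the codimensions of the centres in $Q$ and $X(J)$ respectively.

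Second, I would extract $Z$ and $W$ explicitly from the construction of the link in Theorem \ref{SarkisovThm}. I expect $Z$ to be a lower-dimensional quadric of the same Pfister-neighbour flavour as $q$ (so that Thm.\ \ref{Q(J,u)motive} again applies to it), and $W$ to be a projective homogeneous variety associated to a Jordan subalgebra $J' \subset J$, opening the door to induction on $n$ (and, at the base of the induction, to a direct computation when $n = 3$ or $r = 3$). Granted such a description, the summands $\mathcal{M}(Z)(i)$ and $\mathcal{M}(W)(j)$ decompose into Tate twists of $R^{r}$ and $F^{r}_{n-1}$, and the left-hand side of the displayed isomorphism decomposes via Thm.\ \ref{Q(J,u)motive} into Tate twists of Rost and higher Rost motives.

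Third, isolate $\mathcal{M}(X(J))$ by cancelling the common summands. By indecomposability of $R^r$ and $F^r_n$ in the motivic categories of \cite{PSZ08} and Vishik, a Krull--Schmidt argument is available, so the shared Tate twists of $R^r$ on both sides may be cancelled. What survives on the $X(J)$ side is, by construction, $F^{r}_{n}$ together with a finite list of Tate-twisted copies of $R^{r}$ whose multiplicities and twist indices are determined by subtracting the blow up contributions at $W$ from the quadric decomposition plus the blow up contributions at $Z$.

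The main obstacle is the second step: identifying $Z$ and, especially, $W$ precisely enough that their motives are already decomposed into Rost pieces. In particular, recognising $W$ as an $X(J')$ (or as a union of such varieties) for a Jordan subalgebra $J'$ to which induction applies is the critical geometric input; this should come from the explicit Jacobson birational map underlying Theorem \ref{SarkisovThm}. A secondary technical point is checking in each of the four cases ($r = 0, 1, 2$ with $n \geq 3$, and $r = 3$ with $n = 3$) that the arithmetic of twist indices and multiplicities matches the asserted list, and that the Krull--Schmidt cancellation is valid with the relevant $\mathbb{Z}/p\mathbb{Z}$ coefficients for the primes at hand.
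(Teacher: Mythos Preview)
Your overall strategy is exactly the paper's: apply the blow up formula to both ends of the link to get
\[
\mathcal{M}(Q(J_n,u)) \oplus \bigoplus_{i=1}^{d_1-1}\mathcal{M}(Z_1)\{i\} \;\cong\; \mathcal{M}(X(J_n)) \oplus \bigoplus_{j=1}^{d_2-1}\mathcal{M}(X(J_{n-1}))\{j\},
\]
decompose the left side via Thm.~\ref{Q(J,u)motive}, run induction on $n$ for the $X(J_{n-1})$ terms on the right, and cancel using Krull--Schmidt. Your identification of $W$ as $X(J')$ for a Jordan subalgebra is correct (Lemma~\ref{B2smooth}: $Z_2\cong X(J_{n-1})$), and the base case $X(J_2)$ is indeed a quadric.

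The one substantive misstep is your expectation for the centre $Z\subset Q$. It is \emph{not} a lower-dimensional quadric, and Thm.~\ref{Q(J,u)motive} does not apply to it. In the paper $Z=Z_1$ is, over $\bar k$, a disjoint union of projective spaces ($r=1$), $\mathbb{P}^1\times\mathbb{P}^{2n-3}$ ($r=2$), or the $10$-dimensional spinor variety ($r=3$); over $k$ it is a generically split homogeneous variety for $\textup{Aut}(J,u)$ (Thm.~\ref{B1orbit}). Its motivic decomposition into Tate-twisted Rost motives (Prop.~\ref{B1motive}) comes from the Petrov--Semenov--Zainoulline machinery for generically split varieties, not from Vishik's quadric results. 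So the $F^r_{n-1}$ summands you need to cancel all sit inside $\mathcal{M}(Q)$ already, while $\mathcal{M}(Z_1)$ contributes only $R^r$'s.

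Two smaller points. First, $F^r_n$ need not be indecomposable (the paper remarks this), so the cancellation is not ``by indecomposability of $F^r_n$'' but by the Krull--Schmidt theorem for motives with coefficients in a field; this forces you to pass to $\mathbb{Z}/2$ coefficients, which is also where Prop.~\ref{B1motive} lives. Second, the base of the induction and the twist bookkeeping are case-by-case in $r$ and the parity of $n$; that accounting is left as a routine but necessary check.
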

The $r=1$ case of this theorem provides an alternate proof of Krashen's motivic equivalence \cite[Thm.\ 3.3]{Kr07}. On the other hand, the $r=1$ case of this theorem is shown in \cite[Thm.\ (C)]{SZ08} by using Krashen's result (See Remark \ref{Krashen}).

\begin{blank} \textbf{Notational conventions.}
We will fix a base field $k$ of characteristic 0 (unless stated otherwise), and an algebraically closed (equivalently, a separably closed) field extension $\bar{k}$ of $k$. We only use the characteristic 0 assumption to show the varieties $X(J)$ and $Z_1$ are homogeneous. 
We will assume a \textit{scheme} over $k$ is a separated scheme of finite type over $k$, and a \textit{variety} will be an irreducible reduced scheme.\\
For a scheme $X$ over $k$, $\overline{X}=X \times_k \bar{k}$. \\
$G$ denotes an algebraic group over $k$. \\ $a_i$ are coefficients of the $r$-Pfister form $\phi$ over $k$.\\ $b_i$ are coefficients of the $n$-dimensional quadratic form $b$ over $k$.\\ $q$ denotes a quadratic form over $k$, and $Q$ is the associated projective quadric.\\ $i_W(q)$ is the Witt index of the quadratic form $q$. \\$C$ is a composition algebra (not to be confused with the Lie type $C_n$), and $c_i$ are elements of $C$.\\ $J$ is a Jordan algebra, $x$ is an element of $J$, and $u$ is an idempotent in $J$. \\
$X(J)$, $Q(J,u)$, $Z_1$ and $Z_2$ are complete schemes over $k$ defined in Section \ref{SarkisovSect}.\\
$F^r_n$ and $R^r$ are motives defined in Section \ref{SectMotivesNeighbours} (not to be confused with the Lie type $F_4$). \\
$\mathcal{M}(X)$ denotes the motive of a smooth complete scheme $X$, and $M\{i\}$ denotes the $i^{\textup{th}}$ Tate twist of the motive $M$.
\end{blank}
The paper is organized as follows. In Section \ref{JordanAlgSect} we will recall the terminology and classification of reduced simple Jordan algebras. In Section \ref{SarkisovSect} we describe the variety $X(J)$ and show it is homogeneous. Also we will define the birational map $v_2$ from a quadric to $X(J)$ and show that it is a Sarkisov link by analyzing its scheme of base points. In Section \ref{MotiveSect} we deduce motivic decompositions for a class of quadrics, as well as for the indeterminacy locus of $v_2$ introduced in Section \ref{SarkisovSect}. Finally we put these decompositions together to give a motivic decomposition of $X(J)$.

\section{Jordan algebras} \label{JordanAlgSect}

A \textit{Jordan algebra} over $k$ is a commutative, unital (not necessarily associative) $k$-algebra $J$ whose elements obey the identity $$x^2(xy)=x(x^2y) \text{  for all }x,y\in J.$$ A \textit{simple} Jordan algebra is one with no proper ideals. An \textit{idempotent} in $J$ is an element $u^2=u\neq 0 \in J$. Two idempotents are \textit{orthogonal} if they multiply to zero, and an idempotent is \textit{primitive} if it is not the sum of two orthogonal idempotents in $J$. For any field extension $l/k$, we can \textit{extend scalars} to $l$ by taking $J_l = J \otimes_k l$, for example $\bar{J}=J \otimes \bar{k}$. A Jordan algebra has \textit{degree} $n$ if the identity in $\bar{J}$ decomposes into $n$ pairwise orthogonal primitive idempotents over $\bar{k}$. A degree $n$ Jordan algebra is \textit{reduced} if the identity decomposes into $n$ orthogonal primitive idempotents over $k$.

The classification of reduced simple Jordan algebras of degree $\geq 3$ is closely related to the classification of composition algebras.
A \textit{composition algebra} over $k$ is a unital $k$-algebra $C$ together with a non-degenerate quadratic form $\phi$ on $C$ (called the \textit{norm form}) such that for any $c_1,c_2\in C$ we have that $\phi(c_1c_2)=\phi(c_1)\phi(c_2)$.
Two composition algebras are isomorphic as $k$-algebra iff their norm forms are isometric.
Every norm form is an $r$-fold \textit{Pfister form}, which is to say $$\phi=\langle \langle a_1, \cdots, a_r \rangle \rangle := \langle 1, -a_1 \rangle \otimes \cdots \otimes \langle 1 ,-a_r \rangle.$$
Furthermore, $r$ must be $0,1,2$ or $3$, and for any such $r$-fold Pfister form $\phi$, there is a composition algebra with $\phi$ as its norm form and a natural conjugation map $^-:C\to C$.

Let $C$ be a composition algebra with norm form $\phi = \langle \langle a_1, \cdots , a_r \rangle \rangle$, and let $b=\langle b_1, \cdots , b_n \rangle$ be a non-degenerate quadratic form.
Then we can define a reduced Jordan algebra in the following way.
Let $\Gamma=\textup{diag}(b_1,\cdots ,b_n)$, and let $\sigma_b(x):=\Gamma^{-1}\bar{x}^t \Gamma$ define a map from $M_n(C)$ to $M_n(C)$.
Then $\sigma_b$ is an involution (i.e.\ an anti-homomorphism such that $\sigma_b^2=\sigma_b$), so we can define $\textup{Sym}(M_n(C),\sigma_b)$ to be the commutative algebra of symmetric elements (i.e. elements $x$ such that $\sigma_b(x)=x$).
The product structure is defined by $x \circ y = \frac{1}{2}(xy+yx)$, using the multiplication in $C$.
 When $C$ is associative (i.e. $r=0,1$ or $2$) we know $\textup{Sym}(M_n(C),\sigma_b)$ is Jordan. For $r=3$, it is only Jordan when $n\leq 3$, so in what follows we will always impose this condition in the $r=3$ case.

The Jordan algebra isomorphism class of $\textup{Sym}(M_n(C),\sigma_b)$ only depends on the isomorphism classes of $b$ and $C$, and not on the diagonalization we have chosen for $b$. The following theorem states that in degrees $\geq 3$ these make up all of the reduced Jordan algebras up to isomorphism.

\begin{thm}(\textbf{Coordinatization} \cite[17]{Mc04},\cite[p.137]{Ja68})
Let $J$ be a reduced simple Jordan algebra of degree $n\geq 3$. Then there exists a composition algebra $C$ and an $n$-dimensional  quadratic form $b$ such that $J \cong \textup{Sym}(M_n(C),\sigma_b)$.
\end{thm}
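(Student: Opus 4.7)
The plan is to follow the classical Jacobson coordinatization argument, whose two main ingredients are the Peirce decomposition of $J$ and the reconstruction of a composition algebra from an off-diagonal Peirce space.

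First I would use the hypothesis that $J$ is reduced of degree $n$: by definition there is a decomposition $1 = u_1 + \cdots + u_n$ of the identity into pairwise orthogonal primitive idempotents $u_i \in J$. The Peirce decomposition then gives $J = \bigoplus_{i \leq j} J_{ij}$, where $J_{ii} = \{x : u_i \circ x = x\}$ and, for $i \neq j$, $J_{ij} = \{x : u_i \circ x = u_j \circ x = \tfrac12 x\}$. The Jordan identity forces the multiplication rules $J_{ii}\circ J_{ij} \subseteq J_{ij}$, $J_{ij}\circ J_{jk}\subseteq J_{ik}$ for distinct $i,j,k$, $J_{ij}\circ J_{ij}\subseteq J_{ii}+J_{jj}$, and $J_{ij}\circ J_{kl}=0$ when $\{i,j\}\cap\{k,l\}=\emptyset$. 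Since the $u_i$ are primitive and $J$ is simple reduced, a standard argument (using that $J_{ii}$ is a local Jordan algebra with no nontrivial idempotents) gives $J_{ii} = k\cdot u_i$.

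Next I would extract the composition algebra $C$ from a single off-diagonal piece, say $C := J_{12}$. Crucially, because $n \geq 3$, there is a third idempotent $u_3$, and one may pick a fixed invertible element $v \in J_{23}$ (existing by simplicity) together with a reference element of $J_{13}$ in order to transport products: a rule of the form $c_1 \ast c_2 := 2(c_1\circ v)\circ(v\circ c_2)$, suitably rescaled, defines a bilinear unital product on $C$. The induced quadratic form $\phi$, measured by $c\circ c \in k u_1 + k u_2$, is multiplicative because the Jordan identity forces a Cayley-type relation between the off-diagonal Peirce blocks. Hurwitz's theorem then identifies $C$ as one of the four composition algebras, with $\phi$ an $r$-fold Pfister form, $r \in \{0,1,2,3\}$. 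The same procedure, anchored at the same $v$'s, identifies every other $J_{ij}$ with $C$ as a $C$-bimodule.

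With $C$ and the scaling constants in hand, I would read off the form $b = \langle b_1, \ldots, b_n\rangle$ from the squares $v_{ij}^2 = b_{ij}(u_i + u_j)$ of chosen invertible representatives $v_{ij} \in J_{ij}$, normalizing by setting $b_1 = 1$ and $b_i = b_{1i}$. The map $\Psi : J \to \textup{Sym}(M_n(C), \sigma_b)$ sending $u_i \mapsto E_{ii}$ and $c \in J_{ij}\mapsto c E_{ij} + b_i^{-1} b_j \bar{c} E_{ji}$ then sends each Peirce piece into the corresponding symmetric matrix piece. One checks Peirce block by Peirce block that $\Psi$ respects the Jordan product; bijectivity is automatic from the dimension count and the simplicity of both sides.

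The main obstacle is the middle step: verifying that the multiplication defined on $J_{12}$ is associative-up-to-alternativity enough to produce a composition algebra, and that it is independent (up to isomorphism) of the choices of invertible representatives in the neighbouring Peirce spaces. This is exactly where $n \geq 3$ is indispensable, since the construction relies on propagating products through a third index; the coherence check is the technical heart of Jacobson's theorem and requires careful bookkeeping with the Jordan identity and the Peirce multiplication rules.
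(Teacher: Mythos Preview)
The paper does not give its own proof of this statement: it is quoted as the classical Coordinatization theorem and simply attributed to \cite[17]{Mc04} and \cite[p.137]{Ja68}. So there is no in-paper argument to compare against; the theorem functions purely as a cited structural input for the rest of Section~1 and Section~2.

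Your outline is a reasonable sketch of the classical Jacobson argument one finds in those references: Peirce decomposition with respect to the orthogonal primitive idempotents, the identification $J_{ii}=k\,u_i$ from primitivity and simplicity, the extraction of a coordinate algebra $C$ from an off-diagonal Peirce space by routing products through a third index (which is exactly where $n\ge 3$ is used), the appeal to Hurwitz to pin down $C$ as a composition algebra, and finally the explicit matrix realization in $\textup{Sym}(M_n(C),\sigma_b)$. That is the shape of the proof in Jacobson's book and in McCrimmon's exposition, so your approach is aligned with the sources the paper cites rather than genuinely different. One small caution: in the standard treatment one first shows that the coordinate algebra is \emph{alternative} (via the Shirshov--Cohn style identities coming from the Jordan axiom in the presence of three connected idempotents), and only then that the trace/norm form is multiplicative; invoking Hurwitz directly presupposes that step, so in a full write-up you would want to make the alternativity verification explicit before naming $C$ a composition algebra.
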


\section{The Sarkisov link} \label{SarkisovSect}

We will define a birational map from a projective quadric to a projective homogeneous variety, $X(J)$, and show it is an elementary link in terms of Sarkisov (see \ref{SarkisovLink}).

Let $r=0,1,2,3$ and $n\geq 3$, and if $r=3$ then $n=3$. Throughout we will fix a composition
algebra $C$ of dimension $2^r$ over $k$, and elements $b_i\in k^*$ such that 
$b=\langle b_1,\cdots,b_n\rangle$ is a non-degenerate quadratic form. Let $J=
\textup{Sym}(M_n(C),\sigma_b)$ (see Section \ref{JordanAlgSect}). Then $J$ is a
central simple reduced Jordan algebra. Jacobson defined the closed subset $V_J \subset \mathbb{P}J$ of \textit{rank 1} elements of $J$ (he used the terminology \textit{reduced elements}) and showed it is a variety defined over $k$ \cite[\S 4]{Ja85}.

\begin{blank} \textbf{The Veronese map.} The following
rational map is a generalization of the $r=0$ case where it is the degree 2 Veronese morphism \cite[3]{Ch06} \cite[Last page]{Za93}.
\begin{align*} v_2:\mathbb{P}(C^n) &\dasharrow \mathbb{P}J \\
[c_1,\cdots,c_n] &\mapsto [b_ic_i\bar{c_j}].\end{align*} If the
composition algebra is associative (so $r\neq 3$), then the set-theoretic image
of $v_2$ (where it is defined) is precisely $V_J$. If $r=3$, then the set-theoretic image of $v_2$ isn't closed, but its closure is $V_J$ \cite[Prop.\ 4.2]{Ch06}. Note that this map depends on the choice of $n$ orthogonal primitive idempotents, $v_2([0,\cdots,1,\cdots,0])$, so it depends on more than just the isomorphism class of $J$.
\end{blank}

Let us restrict the map $v_2$ to the projective space
defined by $c_n\in k1$, and abuse notation by sometimes considering $v_2$ as a rational map from $\mathbb{P}(C^{n-1}\times k) \dasharrow V_J$. This map is an isomorphism on the open subset $U=(c_n\neq
0) \subset \mathbb{P}(C^{n-1}\times k)$ \cite[Thm.\ 4.26]{Ja85}, and hence birational. The
projective homogeneous variety we will be interested in is
$X(J)\subset V_J$ the hyperplane of traceless matrices, which has dimension $2^r(n-1)-1$.

\begin{blank} \label{DefnQ(J,u)}\textbf{The quadric $Q(J,u)$.} Define the quadric $Q(J,u)\subset
\mathbb{P}(C^{n-1} \times k)$ by $$\phi \otimes \langle
b_1,\cdots b_{n-1} \rangle \perp \langle b_n \rangle =
(\sum_{i=1}^{n-1}b_ic_i\bar{c_i}) + b_nc_n^2=0.$$ Here $\phi$ is the
norm form of $C$. The right hand side is simply the trace in
$V_J$, so the restriction of the birational map $v_2$ to $Q(J,u)$ has image in $X(J)$. We will often further abuse notation and consider $v_2$ to be the birational map from $Q(J,u)$ to $X(J)$.
\end{blank}

Although the definition of $Q(J,u)$ depends on the diagonalization of $b$, the isomorphism class of $Q(J,u)$ depends only on the isomorphism class of $J$ together with a choice of primitive idempotent $u$,
which we will usually take to be $u=\textup{diag}(0,\cdots,0,1)\in J$, as we have done above.

\begin{remark} \label{RemarkAltTotaro}
Since the birational class of $Q(J,u)$ is independent of $u\in J$, we have another proof of Prop.\ \ref{TotaroBirat} when $r\leq 3$, and if $r=3$ then $n=3$. For more on this, see \ref{ChainSect}.
\end{remark}

For connected algebraic groups $G$ over $\bar{k}$, projective homogeneous $G$-varieties $G/P$ are classified by conjugacy classes of parabolic subgroups $P$ in $G$. Furthermore, the conjugacy classes of parabolics are classified by specifying subsets $\theta$ of the set $\Delta$ of nodes of the Dynkin diagram of $G$, as in \cite[1.6]{Ti65}. In fact we will use the compliment to his notation, so that $\theta = \Delta$ corresponds to a Borel subgroup $P_{\Delta}=B$, and $\theta=\emptyset$ corresponds to $P_{\emptyset}=G$. We use the Bourbaki root numberings. $G^0$ denotes the connected component of the identity in $G$.

\begin{thm} \label{Aut(J)-orbits}
$V_J$ is the union of two $\textup{Aut}(J)$-orbits: $X(J)$ and
$V_J-X(J)$. Furthermore, we have: \\
(r=0): $\overline{X(J)} \cong G/P_{\theta}$, for $G=\textup{Aut}(\bar{J}) \cong \textup{SO}(n)$, if $n\neq 4$ then $\theta=\{1\}$, and if $n=4$ then the Dynkin diagram is two disjoint nodes, where $\theta$ is both nodes. In all cases, these varieties are quadrics. \\
(r=1): $\overline{X(J)} \cong G^0/P_{\theta}$, for $G=\textup{Aut}(\bar{J}) \cong \mathbb{Z}/2 \ltimes \textup{PGL}(n)$ and $\theta=\{1,n-1\}$, this is the variety of flags of dimension 1 and codimension 1 linear subspaces in a vector space.   \\
(r=2): $\overline{X(J)} \cong G/P_{\theta}$, for $G=\textup{Aut}(\bar{J}) \cong \textup{PSp}(2n)$ and $\theta=\{2\}$, this is the second symplectic Grassmannian. \\
(r=3): $\overline{X(J)} \cong G/P_{\theta}$, for $G=\textup{Aut}(\bar{J}) \cong F_4$ and $\theta=\{4\}$, this may be viewed as a hyperplane section of the Cayley plane.
\end{thm}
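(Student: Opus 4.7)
The plan is to first establish that $V_J$ consists of exactly two $\textup{Aut}(J)$-orbits, and then to carry out the identification $\overline{X(J)} \cong G/P_\theta$ case by case by trivializing the composition algebra over $\bar{k}$.

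For the orbit decomposition, the key tool is the generic trace $\textup{tr}:J \to k$, which is a linear form preserved up to a scalar character by every Jordan algebra automorphism, so $\{\textup{tr}=0\}$ is an $\textup{Aut}(J)$-stable hyperplane in $\mathbb{P}J$. Consequently $X(J) = V_J \cap \{\textup{tr}=0\}$ and its complement in $V_J$ are both $\textup{Aut}(J)$-stable. A rank 1 element with nonzero trace is a nonzero scalar multiple of a primitive idempotent, and the classical transitivity of $\textup{Aut}(J)$ on primitive idempotents of a reduced simple Jordan algebra of degree $\geq 3$ handles $V_J \setminus X(J)$. Transitivity on $X(J)$ I would verify after reduction to $\bar{k}$ by showing that the stabilizer in $G^0$ of a chosen basepoint of $X(\bar{J})$ is a parabolic subgroup, so the $G^0$-orbit is closed of the correct dimension and exhausts $X(\bar{J})$.

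For the case analysis I pass to $\bar{k}$ and apply the Coordinatization Theorem, using that the split composition algebras of dimensions $1,2,4,8$ are $\bar{k}$, $\bar{k} \times \bar{k}$, $M_2(\bar{k})$, and the split octonions respectively. This identifies $\bar{J}$ as: symmetric matrices $\textup{Sym}(M_n(\bar{k}))$ when $r=0$; $M_n(\bar{k})$ when $r=1$ (the symmetric elements of $M_n(\bar{k}) \times M_n(\bar{k})$ under the swap involution being pairs $(A,A^t)$, projected to the first coordinate); matrices in $M_{2n}(\bar{k})$ fixed by a symplectic involution when $r=2$; and the split Albert algebra of dimension $27$ when $r=3$. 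In the first three cases rank 1 elements admit an explicit matrix interpretation, and imposing $\textup{tr}=0$ singles out, respectively, the projective quadric $\{b(c)=0\}$, the incidence variety of point-in-hyperplane pairs $\{w(v)=0\}$, and the symplectic Grassmannian of isotropic $2$-planes. In the Albert case the closure of $V_{\bar{J}}$ is the Cayley plane and $X(\bar{J})$ is its hyperplane section $\{\textup{tr}=0\}$. The identifications $\textup{Aut}(\bar{J}) \cong \textup{SO}(n)$, $\mathbb{Z}/2 \ltimes \textup{PGL}(n)$, $\textup{PSp}(2n)$ and $F_4$ then follow from standard descriptions of the automorphism groups of these four models, with the $\mathbb{Z}/2$ in the $r=1$ case arising from the swap of the two factors of $\bar{k} \times \bar{k}$.

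The parabolic $P_\theta$ is finally read off as the stabilizer in $G^0$ of a chosen basepoint of $X(\bar{J})$, using Bourbaki root numberings and the conventions of \cite{Ti65}: the stabilizer of an isotropic line in $\textup{SO}(n)$ is $P_{\{1\}}$ (with the $n=4$ case of type $D_2 = A_1 \times A_1$ requiring $\theta = \Delta$); the simultaneous stabilizer of a line and a hyperplane in $\textup{PGL}(n)$ is $P_{\{1,n-1\}}$; the stabilizer of an isotropic $2$-plane in $\textup{PSp}(2n)$ is $P_{\{2\}}$; and the hyperplane section of the Cayley plane is $F_4/P_{\{4\}}$. The main obstacle is the $r=3$ case, since neither the Albert algebra nor the Cayley plane admits an elementary coordinate description, so one must rely on the structure theory of the split Albert algebra and the known decomposition of $\mathbb{P}\bar{J}$ into $F_4$-orbits to conclude that $X(\bar{J})$ is a single $F_4$-orbit and to identify it with $F_4/P_{\{4\}}$. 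A secondary subtlety is the disconnectedness of $\textup{Aut}(\bar{J})$ when $r=1$, where one must verify that the connected component $G^0 \cong \textup{PGL}(n)$ already acts transitively on $X(\bar{J})$.
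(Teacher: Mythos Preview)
Your proposal is correct and takes a genuinely different route from the paper for the key step, namely the transitivity of $\textup{Aut}(\bar{J})$ on $X(\bar{J})$ and the identification of the parabolic. The treatment of $V_J \setminus X(J)$ (rank one with nonzero trace is a scalar multiple of a primitive idempotent, then invoke transitivity on primitive idempotents) is essentially the same in both.

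For $X(J)$ the paper argues uniformly via highest-weight theory: it shows the traceless part $J_0$ is an irreducible $\textup{Aut}(\bar{J})$-representation, computes its highest weight in each case, and then uses uniqueness of the closed orbit in $\mathbb{P}(J_0)$ together with a dimension count to conclude that $X(\bar{J})$ coincides with that closed orbit; the parabolic $P_\theta$ is then read off from which negative simple root spaces annihilate the highest weight vector. Your approach instead passes to the explicit split models (symmetric matrices, full matrices, symplectic-symmetric matrices, the split Albert algebra) and identifies $X(\bar{J})$ directly, as a set, with a classical flag variety (quadric, incidence variety, isotropic $2$-planes, hyperplane section of the Cayley plane), inheriting homogeneity and the stabilizer description from those known varieties. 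Your method is more geometric and arguably more transparent for $r\leq 2$, but it trades the paper's self-contained representation-theoretic dimension computations for appeals to standard facts---in particular, the identification of Jordan rank~$1$ traceless elements with symplectic isotropic $2$-planes when $r=2$, and the $F_4$-orbit structure of the Cayley plane when $r=3$, deserve an explicit reference or a short argument. The paper's approach also yields the irreducibility of $J_0$ as a byproduct, which your argument does not need and does not prove.
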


\begin{proof}
$\textup{Aut}(J)$ acts on $V_J$, since the rank is preserved by automorphisms. So it is sufficient to prove this theorem for $k=\bar{k}$.  Every element of $V_J-X(J)$ is $[u]$ for some rank one idempotent $u$ \cite[Prop.\ 3.8]{Ch06}, and
$\textup{Aut}(J)$ is transitive on rank one idempotents by
Jacobson's coordinatization theorem, since the field is
algebraically closed \cite[17]{Mc04}.

Clearly $X(J)$ is preserved by $\textup{Aut}(J)$, since the trace is
preserved by automorphisms.
All that remains is to show that $\textup{Aut}(J)$ is transitive on $X(J)$, which we will do in cases. Consider the $2^{r-1}n(n-1)+n$ dimensional $\textup{Aut}(J)$ representation $J=k\oplus J_0$, where $J_0$ is the subrepresentation of traceless elements in $J$.
In all cases we will show that $J_0$ is an irreducible $\textup{Aut}(J)$ representation, find the highest weight,
and show that there is a closed orbit in $\mathbb{P}(J_0)$ which is contained in $X(J)$ and is of the same dimension. Therefore, by uniqueness of the closed orbit, which follows from the irreducibility of $J_0$, $X(J)$ is the closed orbit.

\hspace{4mm} \textit{Case $r=0$:} For simplicity, we will modify the definition of $J$. Instead of taking $n
\times n$ matrices such that $x^t=x$, we will take matrices such that $M^{-1}x^tM=x$ where
$$M= \left[
  \begin{array}{ c c }
     0 & I_m \\
     I_m & 0
  \end{array} \right]
 \textup{ for $n=2m$, and  } M=\left[
  \begin{array}{ c c c }
     0 & I_m & 0 \\
     I_m & 0 & 0 \\
0 & 0 & 1
  \end{array} \right]
 \textup{ for $n=2m+1$.}$$
This change is justified by recalling that any two orthogonal involutions in the same matrix algebra over an algebraically closed field are isomorphic.
Now the Lie algebra of derivations $\textup{Der}(J)\cong \mathfrak{so}(n)$ is in the more standard form, and we can choose elements of the Cartan subalgebra $\mathfrak{h}$ as diagonal matrices $H_i=E_{i,i}-E_{m+i,m+i}$ as in \cite[18]{FH91}.
Following the conventions of \cite{FH91}, we have a dual basis $L_i(H_j)=\delta_{ij}$ of $\mathfrak{h}^*$, and we wish to find the highest weight of the representation $J_0$.

For $n=2m$, the roots of $\mathfrak{so}(2m)$ are $\pm L_i \pm L_j$ for $1\leq i\neq j \leq m$.
One can check that the non-zero weights of $J_0$ are $\pm L_i \pm L_j$ for all $i,j$.
In particular, the element $E_{1,m+1}$ is a weight vector in $J_0$ for the weight $2L_1$, and the irreducible representation with highest weight $2L_1$ is of the same dimension as $J_0$.
Therefore $J_0$ is the irreducible representation with highest weight $2L_1$, and since $\textup{Aut}(J)$ is simple, there is a unique closed orbit in $\mathbb{P}(J_0)$, and it is the orbit of $E_{1,m+1}$.
To determine the dimension of the orbit, we ask which root spaces $\mathfrak{g}_{-\alpha_i}$ in the Lie algebra for the negative simple roots $-\alpha_i$, kill the weight space of $2L_1$.
 For $n=4$, neither root space, for $-\alpha_1=-L_1-L_2$ nor $\alpha_2=-L_1+L_2$, kills this weight space.
For any $n\geq 6$ even, all of the negative simple root spaces kill the weight space $2L_1$ except for the one for $-L_1+L_2$.
 In either case the dimension of the parabolic fixing $E_{1,m+1}$ is $2m^2-3m+2$, so the dimension of the orbit is $n-2$.
This is the dimension of the closed invariant subset $X(J)$, which must contain a closed orbit.
Since there is only one closed orbit, $X(J)$ must be the entire orbit.

A similar analysis may be carried out in the $n=2m+1$ case, where again $E_{1,n+1}$ is a weight vector for the highest weight $2L_1$.

\hspace{4mm} \textit{Case $r=1$:} We have the action of the connected component $\textup{Aut}(J)^0=PGL(n)$ on $J\cong M_n(k)$, acting by conjugation.
The induced action of the Lie algebra of derivations $\textup{Der}(J)\cong \mathfrak{sl}(n)$ on $J_0$ is just the adjoint action on $\mathfrak{sl}(n)$.
With the standard diagonal Cartan subalgebra, and choice of positive roots dual to $H_i=E_{i,i}-E_{i+1,i+1}$, the highest weight is in the representation $J_0$ is $2L_1+L_2 +\cdots + L_{n-1}$ with multiplicity 1.
A dimension count shows this representation is irreducible, and the dimension of the parabolic fixing a highest weight vector is $n^2-2n+2$.
So the dimension of the unique closed orbit is $2n-3$, which is the dimension of $X(J)$. Therefore $X(J)$ is the closed orbit.

\hspace{4mm} \textit{Case $r=2$:} As in the $r=0$ case, we will change our symplectic involution $\sigma(x)=\bar{x}^t$ to $\sigma_M(x)=M^{-1} x^t M$ for $$M= \left[
  \begin{array}{ c c }
     0 & I_n \\
     -I_n & 0
  \end{array} \right].$$ Then the Lie algebra of derivations $\textup{Der}(J)\cong \mathfrak{sp}(2n)$ is in the standard form, by choosing a Cartan subalgebra of diagonal matrices, with $H_i=E_{i,i}-E_{n+i,n+i}$ and dual basis $L_i \in \mathfrak{h}^*$. The roots of $\mathfrak{sp}(2n)$ are $\pm L_i \pm L_j$ for all $i,j$, and the non-zero weights of $J_0$ are $\pm L_i \pm L_j$ for $i \neq j$. In particular, the highest weight is $L_1+L_2$ in the standard weight ordering of \cite[p.257]{FH91}. Comparing dimensions shows that $J_0$ is irreducible, and the parabolic fixing a highest weight vector is of dimension $2n^2-3n+5$. So the unique closed orbit in $\mathbb{P}(J_0)$ is of dimension $4n-5$, which is the same as the dimension of $X(J)$. Therefore $X(J)$ is the unique closed orbit.

\hspace{4mm} \textit{Case $r=3$:} First notice that $J_0$ is a 26-dimensional non-trivial representation of $F_4=\textup{Aut}(J)$. It is well-known that such a representation is unique, and has a 15-dimensional unique closed orbit in $\mathbb{P}(J_0)$. Since $X(J)$ is a 15-dimensional closed invariant subset, it must be equal to the closed orbit.
\end{proof}

\begin{remark}
Over the complex numbers the varieties with exactly two $G$-orbits
for some semisimple algebraic group $G$, one of which is of
codimension one, have been classified by \cite{Ah86}. The
varieties $V_J$ account for most of these.
\end{remark}

\stepcounter{thm}
\subsubsection{Blowing up the base loci} \label{BlowupSect}

Any birational map of projective varieties over a field can be expressed as a blow up followed a blow down of closed subschemes (Prop.\ \ref{Blowupscheme}).
In this section we will show that these closed subschemes, for our birational map from $Q(J,u)$ to $X(J)$, are (usually) smooth varieties, and hence see that the map is an elementary link in terms of Sarkisov.

Given a rational map between projective varieties $f:Y\dasharrow X$, we
can define the scheme of base points of $f$ as a closed subscheme
of $Y$ \cite[II. Example 7.17.3]{Ha77}. 

\begin{prop} \label{Blowupscheme}
Let $f:Y\dasharrow X$ be a birational map of projective varieties over a
field $k$ with $g:X\dasharrow Y$ the inverse birational map. Let $Z_Y$
and $Z_X$ be the schemes of base points of $f$ and $g$
respectively. Then the blow up $\tilde{Y}$ of $Y$ along $Z_Y$ is
isomorphic to the blow up $\tilde{X}$ of $X$ along $Z_X$.
\end{prop}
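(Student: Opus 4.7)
My plan is to exhibit a single scheme which simultaneously realizes both blow ups, namely the scheme-theoretic graph of $f$. Form $\Gamma \subset Y \times X$ as the closure of the graph of $f$ over its locus of definition $U = Y \setminus Z_Y$. Under the canonical isomorphism $Y \times X \cong X \times Y$ swapping factors, and because $g$ is the inverse birational map to $f$, this same $\Gamma$ equals the closure of the graph of $g$. Hence the two projections $p_Y : \Gamma \to Y$ and $p_X : \Gamma \to X$ are symmetric projective birational morphisms, and the proof reduces to identifying $p_Y$ with the blow up $\tilde{Y} \to Y$ along $Z_Y$.

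For this identification I would invoke the universal property of the blow up: $\tilde{Y}$ is initial among $Y$-schemes on which $\mathcal{I}_{Z_Y}$ pulls back to an invertible sheaf. Fix an embedding $X \hookrightarrow \mathbb{P}^N$ via a very ample line bundle; then locally on $Y$ the map $f$ is given by an $(N+1)$-tuple of sections $s_0, \ldots, s_N$ of a line bundle $L$, and $Z_Y$ is cut out by the image of the evaluation map $(L^\vee)^{\oplus (N+1)} \to \mathcal{O}_Y$. Inside $Y \times \mathbb{P}^N$ the graph $\Gamma$ is cut out by the bilinear relations $s_i T_j - s_j T_i = 0$ (with $T_i$ homogeneous coordinates on $\mathbb{P}^N$); on the affine chart $\{T_j \neq 0\}$ these relations force every $s_i$ to be an explicit multiple of $s_j$, so $p_Y^* \mathcal{I}_{Z_Y}$ is locally principal and hence Cartier. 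Conversely, given any $h : W \to Y$ making $h^* \mathcal{I}_{Z_Y}$ invertible, the ratios $[h^* s_0 : \cdots : h^* s_N]$ define a morphism $W \to \mathbb{P}^N$ whose image lies in $X$ (it does so generically, and $X$ is closed) and which produces the unique factorization through $\Gamma$. Thus $\Gamma \cong \tilde{Y}$ over $Y$, and by the same argument applied to $g$, $\Gamma \cong \tilde{X}$ over $X$, which gives the desired isomorphism.

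The main obstacle is the scheme-theoretic bookkeeping in the middle paragraph: one must verify that the ``scheme of base points'' in the sense of \cite[II.Ex.\ 7.17.3]{Ha77} actually agrees with the subscheme cut out by the image ideal of $(L^\vee)^{\oplus (N+1)} \to \mathcal{O}_Y$, and that $\Gamma$ (rather than some non-reduced thickening of it) is the correct candidate on both sides. Everything else, including the invertibility of $p_Y^* \mathcal{I}_{Z_Y}$ and the universal property of $\Gamma$, follows formally from the relations $s_i T_j = s_j T_i$ once the ideal identification is in place.
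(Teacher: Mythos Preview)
Your approach is essentially the same as the paper's: both identify the closure of the graph of $f$ in $Y\times X$ with each of the two blow ups, and observe that the graph of $f$ and the graph of $g$ have the same closure. The only difference is that where you sketch the universal-property verification that $\Gamma\cong\tilde{Y}$, the paper simply cites \cite[Prop.~IV-22]{EH00} for the fact that the (reduced) graph closure is the blow up along the base scheme; your worry about reducedness is handled there because $Y$ is a variety, so its blow up is integral.
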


\begin{proof}
Let $U \subset Y$ be the open subset on which $f$ is an
isomorphism. Then the graph $\Gamma_f$ of $f|_U$ is a subset of $U
\times f(U) \subset Y \times X$. The closure of $\Gamma_f$ in $Y
\times X$, given the structure of a closed reduced subscheme, is the blow
up $\tilde{Y}$ \cite[Prop.\ IV.22]{EH00}\footnote{They assume $Y$
is affine, but we can drop this assumption since the blow up is
determined locally.}.

Similarly, $\tilde{X}$ is the closure of $\Gamma_g \subset U
\times f(U)$. Since the inverse of $f$ on $U$ is $g$, we have that
$\tilde{X}$ and $\tilde{Y}$ are both closures in $Y \times X$ of
the same subset of $U \times f(U)$. So they have the same
structure as reduced schemes, and hence $\tilde{X} \cong \tilde{Y}$.
\end{proof}

\begin{blank} \textbf{Indeterminacy locus of $v_2$.} Let $Z_1$ be the closed \textit{reduced} subscheme associated to the scheme of base points in $Q(J,u)$ of
the birational map $v_2$. We will show that $Z_1$ is isomorphic to the
scheme of base points. We denote by $\textup{Aut}(J,u)$ the subgroup of automorphisms of $J$ that fix the primitive idempotent $u$.\end{blank}

\begin{thm} \label{B1orbit}
$Z_1$ is homogeneous under an action of $\textup{Aut}(J,u)$.
\end{thm}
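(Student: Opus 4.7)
The plan is to first exhibit a natural action of $\textup{Aut}(J,u)$ on the ambient projective space of $Q(J,u)$ which is intertwined with the map $v_2$, from which stability of $Z_1$ follows; then to verify transitivity on $Z_1$ over $\bar k$ case by case.

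For the action, I would use the Peirce decomposition $J=J_1(u)\oplus J_{1/2}(u)\oplus J_0(u)$ at $u$. Any $\alpha\in\textup{Aut}(J,u)$ commutes with the left-multiplication $L_u$ (since $\alpha(uy)=\alpha(u)\alpha(y)=u\alpha(y)$), so preserves each Peirce summand. For the primitive idempotent $u=\textup{diag}(0,\ldots,0,1)$ one has $J_1(u)=ku$ and $J_{1/2}(u)\cong C^{n-1}$ via the natural identification with the off-diagonal entries in the last row and column, so $\textup{Aut}(J,u)$ acts $k$-linearly on $C^{n-1}\times k$ and hence projectively on $\mathbb{P}(C^{n-1}\times k)$. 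This action preserves the quadric $Q(J,u)$ because its defining equation is the Jordan trace pulled back along $v_2$, and the trace is automorphism-invariant. To see that $v_2$ itself is equivariant, I would rewrite it intrinsically as (a rescaling of) the Jordan quadratic-representation map $[x]\mapsto [U_x(u)]$, where $U_x(y)=2x(xy)-x^2y$; a direct coordinate check reproduces $[b_ic_i\bar c_j]$ up to an overall scalar that vanishes projectively. Consequently the scheme of base points, and thus the reduced subscheme $Z_1$, is stable under $\textup{Aut}(J,u)$.

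For transitivity I proceed in cases, along the lines of the representation-theoretic arguments in the proof of Thm.\ \ref{Aut(J)-orbits}. When $r=0$ the equations $c_ic_j=0$ force every $c_i$ to vanish, so $Z_1=\emptyset$ and there is nothing to prove. When $r=1$, passing to $\bar k$ where $C\cong \bar k\times \bar k$, the equations $c_i\bar c_j=0$ together with the quadric equation cut out exactly two disjoint copies of $\mathbb{P}^{n-2}$ in the hyperplane $(c_n=0)$, corresponding to the vanishing of the first or the second projection of each $c_i$; these two components are interchanged by the nonidentity component of $\textup{Aut}(\bar J)\cong\mathbb{Z}/2\ltimes \textup{PGL}(n)$, while the identity component of $\textup{Aut}(J,u)$ acts on each $\mathbb{P}^{n-2}$ through its natural $\textup{GL}(n-1)$-type quotient. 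For $r=2,3$ I would regard $J_{1/2}(u)\oplus J_1(u)$ as a representation of the reductive group $\textup{Aut}(J,u)$, locate the unique closed orbit in its projectivization by a highest-weight/dimension count, and match that dimension against the dimension of $Z_1$ computed directly from the equations $c_i\bar c_j=0$; since $Z_1$ is a closed invariant subset of the right dimension, it must coincide with the unique closed orbit, hence be homogeneous.

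The main obstacle will be the transitivity analysis in the $r=3$ octonion case, where non-associativity obstructs direct manipulation of the relations $c_i\bar c_j=0$, and one must rely on the known orbit structure of the natural representation of the reductive subgroup $\textup{Aut}(J,u)\subset F_4$. A secondary difficulty is carrying out the intrinsic Jordan-theoretic reformulation of $v_2$ cleanly, so that equivariance becomes manifest without unpacking explicit coordinates in every case.
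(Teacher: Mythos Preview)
Your outline is essentially the paper's proof: identify $C^{n-1}$ with the Peirce space $J_{1/2}(u)$ to obtain a natural $\textup{Aut}(J,u)$-action under which $Z_1$ is visibly stable, then verify transitivity over $\bar k$ by a closed-orbit/dimension argument in each case ($r=1$ via the decomposition $V\oplus V^*$ with the $\mathbb{Z}/2$ swapping the summands, $r=2,3$ via irreducibility and a highest-weight count, with the dimension of $Z_1$ bounded by a Jacobian rank computation).

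Two small points where the paper is slightly cleaner. First, rather than reformulating $v_2$ via the quadratic representation $U_x(u)$ to get equivariance, the paper simply observes that under $C^{n-1}\cong J_{1/2}(u)$ the locus $Z_1$ is cut out by the intrinsic equation $x^2=0$, which is manifestly automorphism-invariant; this bypasses any coordinate check. Second, you should drop the $J_1(u)=ku$ summand before invoking ``the unique closed orbit'': since $Z_1$ lies in the hyperplane $c_n=0$, one works in $\mathbb{P}(J_{1/2}(u))$, where (for $r=2,3$) the representation is irreducible and the closed orbit is indeed unique; keeping the trivial summand $ku$ would introduce the extra closed orbit $\{[u]\}$ and spoil the uniqueness argument as you stated it.
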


\begin{proof}
To describe the action we will use the vector space isomorphism $C^{n-1}\cong J_{\frac{1}{2}}(u)=\{x\in J | x\cdot u = \frac{1}{2}x\}$. Here, as above, we take $u=\textup{diag}(0, \cdots, 0,1)=E_{n,n}$. This isomorphism is given by sending an element $c\in C^{n-1}$ to the matrix element in $J_{\frac{1}{2}}(u)\subset M_n(C)$ with $n^{th}$ row equal to $[c,0]$.

So we have an $\textup{Aut}(J,u)$ action on $\mathbb{P}(C^{n-1})$. By considering the defining equations, one see that $Z_1$ is isomorphic to the reduced subscheme of $\mathbb{P}(J_{\frac{1}{2}}(u))$ defined by the matrix equation $x^2=0$. So it is clear that the underlying closed subset is stable under $\textup{Aut}(J,u)$.

Finally, to show the action is transitive, it is enough to show it after extending scalars to an algebraically closed field $\bar{k}$. We will use similar arguments as in the proof of Thm.\ \ref{Aut(J)-orbits}.

\hspace{4mm} \textit{Case $r=2$:} Using the notation from the proof of Thm.\ \ref{Aut(J)-orbits}, the roots of the Lie algebra of $\textup{Aut}(J,u)$ are $\pm L_i \pm L_j$ for $i,j \leq n-1$ together with $\pm 2L_n$.
One can check that the non-zero weights of the representation $J_{\frac{1}{2}}(u)$ are $\pm L_i \pm L_n$ for $i \leq n-1$.
A dimension count reveals that $J_{\frac{1}{2}}(u)$ is therefore an irreducible representation with highest weight $L_1+L_n$.
The only negative simple roots that don't kill a highest weight vector are $L_2-L_1$ and $-2L_n$,
so the dimension of the parabolic subgroup that fixes a point in the unique closed orbit in $\mathbb{P}(J_{\frac{1}{2}}(u))$ is $2n^2-5n+6$.
So the dimension of this orbit is $2n-2$.

To see this is the same as the dimension of $Z_1$,
consider the affine cone $\tilde{Z_1}$ over $Z_1$ inside $J_{\frac{1}{2}}(u)$.
Then consider Jacobian matrix of the equations given by $\{x_i\bar{x_j}=0\}$ with respect
to the $4(n-1)$ variables: 4 variables for each coordinate $x_i\in C$. The rank of this matrix at any
point in the affine cone over $Z_1$ is $\leq \textup{dim}(J_{\frac{1}{2}}(u))-\textup{dim}(\tilde{Z_1})$,
where equality holds if the ideal spanned by the polynomials $\{x_i\bar{x_j}\}$ is radical.
By choosing a convenient point, we see that the dimension of $Z_1$ is at most $2n-2$, which is the dimension of the closed orbit.
So if $Z_1$ contained another $\textup{Aut}(J,u)$-orbit, then it would contain another closed orbit.
But the closed orbit is unique, and therefore $Z_1$ is the closed orbit.

\hspace{4mm} \textit{Case $r=3$:} It is well known that the $\textup{Aut}(J,u)\cong\textup{Spin}(9)$ representation given by $J_{\frac{1}{2}}(u)$ for $u=E_{3,3}$ is the 16-dimensional spin representation. The unique closed orbit in $\mathbb{P}(J_{\frac{1}{2}}(u))$ is therefore the 10-dimensional spinor variety. Using a similar argument to the $r=2$ case, we can show the dimension of $Z_1$ is at most 10, so by the uniqueness of the closed orbit we can conclude that $Z_1$ is the closed orbit.

\hspace{4mm} \textit{Case $r=1$:} This case is slightly different from the other two because $\textup{Aut}(J,u) \cong
\mathbb{Z}/2 \ltimes GL(n-1)$ is a disconnected group, and the connected component has
\textit{two} closed orbits in $\mathbb{P}(J_{\frac{1}{2}}(u))$.
The argument is similar to the $r=2$ case, except we find that the $\mathfrak{sl}(n-1)$-representation $J_{\frac{1}{2}}(u)$ is the direct sum of the standard representation $V$ with its dual $V^*$.
So the two closed orbits in $\mathbb{P}(J_{\frac{1}{2}}(u))$ are the orbits of weight vectors for the weights $L_1-L_n$ and $L_n-L_1$, which are the respective closed orbits in $\mathbb{P}V$ and $\mathbb{P}V^*$.
Each $\mathfrak{sl}(n-1)$-orbit has dimension $n-2$.
Furthermore, the $\mathbb{Z}/2$ part of $\textup{Aut}(J,u)$ swaps these two representations, since it acts on matrices as the transpose.
So there is a unique closed $\textup{Aut}(J,u)$-orbit, and it is of dimension $n-2$.

As in the $r=2$ case, by considering the rank of the Jacobian at a closed point in $\tilde{Z_1}$, we see that the dimension of $Z_1$ is at most $n-2$. Since $Z_1$ is $\textup{Aut}(J,u)$-stable, we can conclude that it is the closed orbit.
\end{proof}

\begin{cor} \label{B1smooth}
The reduced scheme $Z_1$ is isomorphic to the scheme of base points of $v_2$ in $Q(J,u)$.
\end{cor}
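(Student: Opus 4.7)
The plan is to show that the scheme of base points $B\subset Q(J,u)$ of $v_2$ is already reduced, so that $B\cong B_{\textup{red}}=Z_1$. Since the components $b_ic_i\bar{c}_j$ of $v_2$ transform equivariantly under $\textup{Aut}(J,u)$, the ideal of $B$ is $\textup{Aut}(J,u)$-stable, so $B$ carries an $\textup{Aut}(J,u)$-action inducing the transitive action on $Z_1$ from Thm.~\ref{B1orbit}. In particular, reducedness (indeed smoothness) of $B$ at a single closed point will automatically propagate to all of $Z_1$.

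For that pointwise check I would recycle the Jacobian computation already implicit in the proof of Thm.~\ref{B1orbit}. In the affine cone $\tilde{Z_1}\subset J_{\frac{1}{2}}(u)$, the convenient point $p$ used there to bound $\dim Z_1$ from above is precisely a point where the Jacobian of the defining polynomials $\{x_i\bar{x}_j\}$ attains rank at least $\dim J_{\frac{1}{2}}(u)-\dim \tilde{Z_1}$. Combined with the always-valid reverse inequality recorded in that proof, this forces equality of the Jacobian rank with $\dim J_{\frac{1}{2}}(u)-\dim \tilde{Z_1}$ at $p$, so the Zariski tangent space to $B$ at $p$ has the expected dimension $\dim Z_1$. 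Hence $B$ is smooth, and in particular reduced, at $p$, and by the transitive $\textup{Aut}(J,u)$-action $B$ is smooth and reduced everywhere, giving $B = Z_1$ as schemes.

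The main obstacle is the explicit lower bound on the Jacobian rank at $p$ in each of the cases $r=1,2,3$. That calculation, however, is essentially already carried out inside the proof of Thm.~\ref{B1orbit}: the dimensions of the three orbits computed there via highest weight vectors agree with the Jacobian-based upper bounds at the chosen convenient points, which forces the equality that I need here. The remaining work is therefore to observe this match and record it as a smoothness statement, rather than to carry out any fresh computation.
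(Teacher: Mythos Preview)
Your proposal is correct and follows essentially the same route as the paper: check the Jacobian rank at a convenient point using the computation already embedded in the proof of Thm.~\ref{B1orbit}, then propagate smoothness (hence reducedness) to all of the base scheme via the $\textup{Aut}(J,u)$-action. If anything, you are more careful than the paper in justifying that the group action lives on the possibly non-reduced scheme $B$ (because the defining polynomials transform equivariantly), whereas the paper simply asserts ``and therefore at all points'' without spelling this out.
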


\begin{proof}
The $r=0$ case is trivial, since $v_2$ is a morphism and hence $Z_1$
is empty. It is sufficient to assume $k$ is algebraically closed.

The other cases follow from the proof of Thm.\ \ref{B1orbit}, as follows.
We can choose a convenient  closed point in the scheme of base points,
and show that the rank of the Jacobian of the defining polynomials given by $\{v_2(x)=0\}$
is equal to the codimension. This implies the scheme is smooth at that point (and therefore at all points),
so in particular, it is reduced.
\end{proof}

\begin{cor}
Over $\bar{k}$, the smooth subscheme $Z_1$ is isomorphic to the following. \\
$(r=0): \emptyset$\\
$(r=1): \mathbb{P}^{n-2} \sqcup \mathbb{P}^{n-2}$ \\
$(r=2): \mathbb{P}^1 \times \mathbb{P}^{2n-3}$ \\
$(r=3):$ The 10-dimensional spinor variety
\end{cor}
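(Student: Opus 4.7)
The plan is to leverage Thm.\ \ref{B1orbit} and Cor.\ \ref{B1smooth}, which together show that over $\bar{k}$ the scheme $Z_1$ is the unique closed $\textup{Aut}(J,u)$-orbit in $\mathbb{P}(J_{\frac{1}{2}}(u))$. The highest weights and orbit dimensions are already computed in the proof of Thm.\ \ref{B1orbit}, so the task reduces to recognizing this closed orbit, in each case, as one of the named projective homogeneous varieties.

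The case $r=0$ is immediate: $v_2$ is the degree-two Veronese morphism, so the base locus, and hence $Z_1$, is empty. For $r=3$ the identification has already been made in the proof of Thm.\ \ref{B1orbit}: $\textup{Aut}(J,u)\cong\textup{Spin}(9)$ acts on $J_{\frac{1}{2}}(u)$ through the 16-dimensional spin representation, and its unique closed orbit in $\mathbb{P}^{15}$ is by definition the 10-dimensional spinor variety.

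For $r=1$, the proof of Thm.\ \ref{B1orbit} shows that as a $\textup{GL}(n-1)$-representation $J_{\frac{1}{2}}(u) \cong V \oplus V^*$, with the outer $\mathbb{Z}/2$ factor interchanging the two summands. The two non-generic closed $\textup{GL}(n-1)$-orbits in $\mathbb{P}(V \oplus V^*)$ are $\mathbb{P}V$ and $\mathbb{P}V^*$, each a copy of $\mathbb{P}^{n-2}$ realized as a linear subspace, and these two linear subspaces are disjoint. Their union is a $\mathbb{Z}/2$-stable closed subvariety of the correct dimension $n-2$, hence by uniqueness equals $Z_1$; as a scheme it is $\mathbb{P}^{n-2} \sqcup \mathbb{P}^{n-2}$.

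For $r=2$ I would exhibit an inclusion $\textup{Sp}(W_1) \times \textup{Sp}(W_2) \hookrightarrow \textup{Aut}(J,u)^0$, with $\dim W_1 = 2$ and $\dim W_2 = 2(n-1)$, acting on $J_{\frac{1}{2}}(u)$ as the external tensor product $W_1 \boxtimes W_2$. This is consistent with the weights $\pm L_i \pm L_n$ of Thm.\ \ref{B1orbit}, the $\pm L_n$ coming from $\textup{Sp}(W_1)$ and the $\pm L_i$ for $i\leq n-1$ from $\textup{Sp}(W_2)$. The unique closed orbit in the projectivization of a tensor product of two standard symplectic representations is the Segre image $\mathbb{P}W_1 \times \mathbb{P}W_2 = \mathbb{P}^1 \times \mathbb{P}^{2n-3}$, whose dimension $2n-2$ matches the orbit dimension computed previously, forcing the identification. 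The main obstacle is establishing this product-subgroup embedding and the tensor-product decomposition rigorously; this amounts to an explicit matrix computation with the quaternion coordinatization $J = \textup{Sym}(M_n(C),\sigma_b)$, reading off how the symplectic structure splits according to the Peirce decomposition for $u=E_{n,n}$.
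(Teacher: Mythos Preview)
Your argument is correct, and for $r=0,1,3$ it coincides with the paper's, which simply appeals back to the representation-theoretic description of $Z_1$ in the proof of Thm.\ \ref{B1orbit}.

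The only substantive divergence is in the $r=2$ case. You propose to embed $\textup{Sp}(W_1)\times\textup{Sp}(W_2)$ (with $\dim W_1=2$, $\dim W_2=2(n-1)$) into $\textup{Aut}(J,u)^0$, recognize $J_{\frac12}(u)$ as $W_1\boxtimes W_2$, and then identify the closed orbit as the Segre variety. This works, and the root data quoted from Thm.\ \ref{B1orbit} essentially give the Lie-algebra version of the embedding, so the ``obstacle'' you flag is not serious. The paper, by contrast, bypasses representation theory here and gives a direct coordinate description: over $\bar{k}$ one has $C\cong M_2(\bar{k})$, the conditions $c_i\bar{c_j}=0$ force each $c_i$ to have rank $\le 1$ and all $c_i$ to share a common one-dimensional kernel, and the assignment $(c_1,\dots,c_{n-1})\mapsto(\text{common kernel line},(c_1,\dots,c_{n-1}))$ yields the isomorphism with $\mathbb{P}^1\times\mathbb{P}^{2n-3}$ explicitly. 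The paper's route is quicker and more concrete; yours has the virtue of being uniform with the $r=3$ analysis and of explaining \emph{why} a Segre product appears, namely because the stabilizer of the idempotent splits as a product of two symplectic factors.
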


\begin{proof}
This follows from our representation theoretic understanding of $Z_1$ from the proof of Thm.\ \ref{B1orbit}.

There are much more explicit ways of understanding the $r\neq 3$ cases. For example, in the $r=2$ case, if
$c=[c_1,\cdots,c_{n-1}]\in \mathbb{P}(M_2(\bar{k})^{n-1})$ is in
$\overline{Z_1}$, then the $c_i$'s are rank 1 matrices that have a common non-zero vector in their kernels.
This can be used to get an explicit isomorphism with $\mathbb{P}^1 \times \mathbb{P}^{2n-3}$.
\end{proof}

\begin{remark}
These varieties are written in \cite[Final pages]{Za93}, where it is implicitly suggested that they are the base locus of the rational map $v_2$.
\end{remark}

\begin{remark} \label{B1irreducible}
It is shown in \cite{Kr07} that $Z_1 \cong \textup{Spec}(k(\sqrt{a_1})) \times_k \mathbb{P}^{n-2}$, where $\langle \langle a_1 \rangle \rangle$ is the norm form associated to $C$. So the above corollary shows that $Z_1$ is irreducible over $k$ except for the single case when $r=1$ and $C$ is split.
\end{remark}

\begin{blank} \textbf{Indeterminacy locus of $v_2^{-1}$.} Let $Z_2$ be the scheme of base points
of the inverse birational map $v_2^{-1}:X(J_n) \dasharrow Q(J,u)$. We
have that $v_2^{-1}([x_{ij}])=[x_{n,1},\cdots,x_{n,n}]$, where this is defined.\end{blank}

We will use the notation $J_{n-1}=\textup{Sym}(M_{n-1}(C),
\sigma_{\langle b_1,\cdots,b_{n-1}\rangle})$, and sometimes
$J_n=J$ for emphasis. The isomorphism class of $J_{n-1}$ depends
on the choice of primitive idempotent $u=E_{n,n}\in J$, but is otherwise
independent of the diagonalization of $\langle
b_1,\cdots,b_{n-1}\rangle$.

\begin{lem} \label{B2smooth}
The scheme of base points $Z_2$ is isomorphic to the smooth subvariety
$X(J_{n-1})$.
\end{lem}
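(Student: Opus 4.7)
The plan is to first identify the set-theoretic base locus, and then promote this to a scheme-theoretic isomorphism by a Jacobian computation exactly as in Cor.~\ref{B1smooth}.

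A point $[x]\in X(J_n)$ lies in $Z_2$ iff the $n$-th row of $x$ vanishes, i.e.\ $x_{n,1}=\cdots=x_{n,n}=0$. The symmetry relation $\sigma_b(x)=x$, which reads $x_{i,j}=(b_j/b_i)\bar{x}_{j,i}$, forces the $n$-th column to vanish as soon as the $n$-th row does, so such an $x$ is supported in the upper-left $(n-1)\times(n-1)$ block and hence lies in the Jordan subalgebra $J_{n-1}\hookrightarrow J_n$. Being rank one is preserved under this inclusion, and since $x_{n,n}=0$ the $J_n$-trace agrees with the $J_{n-1}$-trace, so the traceless condition also descends. Thus the underlying reduced subscheme of $Z_2$ is precisely $X(J_{n-1})$.

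To pass to the scheme structure I would argue exactly as in the proof of Cor.~\ref{B1smooth}. Pick a convenient closed point of $X(J_{n-1})$, for instance a rank-one traceless element supported in the upper-left $2\times 2$ block, and compute the rank at that point of the Jacobian of the defining equations of $X(J_n)$ together with the $n$ linear sections $\{x_{n,j}\}_{j=1}^{n}$. The check is that this rank equals the codimension of $X(J_{n-1})$ in $\mathbb{P}(J_{n,0})$, so $Z_2$ is smooth there. By Thm.~\ref{Aut(J)-orbits} the subgroup $\textup{Aut}(J_{n-1})\subset\textup{Aut}(J_n,u)$ acts transitively on $X(J_{n-1})$, so smoothness propagates to every closed point, $Z_2$ is reduced, and it coincides with the smooth subvariety $X(J_{n-1})$.

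The only non-routine input is the local Jacobian computation; the rest is bookkeeping about the symmetry conditions on $J_n$ and the behaviour of rank and trace under $J_{n-1}\hookrightarrow J_n$.
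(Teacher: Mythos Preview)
Your set-theoretic identification is fine, but the route through a Jacobian computation and homogeneity is more work than the paper does and leaves a gap at $n=3$.

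The paper's argument is shorter because $v_2^{-1}$ is given by the \emph{linear} forms $x_{n,1},\ldots,x_{n,n}$. The base scheme is therefore, by definition, the linear section $X(J_n)\cap\{x_{n,j}=0:\,1\le j\le n\}$. Restricting the defining equations of $X(J_n)\subset\mathbb{P}(J_{n,0})$ to this linear subspace $\mathbb{P}(J_{n-1,0})$ simply kills all equations involving the $n$-th row or column and leaves exactly the equations of $X(J_{n-1})$. So $Z_2=X(J_{n-1})$ already as schemes, and smoothness (hence reducedness) of $X(J_{n-1})$ is known from Thm.~\ref{Aut(J)-orbits} for $n\ge 4$. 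No Jacobian check is needed.

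The genuine gap in your proposal is the case $n=3$. You invoke Thm.~\ref{Aut(J)-orbits} for the transitivity of $\textup{Aut}(J_{n-1})$ on $X(J_{n-1})$, but that theorem is stated for Jordan algebras of degree $\ge 3$; when $n=3$ the algebra $J_2$ has degree $2$ and is outside its scope. The paper handles this boundary case directly: from the matrix equation $x^2=0$ one reads off that $Z_2$ is the smooth quadric $\phi\otimes\langle b_1\rangle\perp\langle b_2\rangle=0$, and $X(J_2)$ is then \emph{defined} to be this quadric. Your argument needs either this computation or an independent reason why $Z_2$ is smooth when $n=3$.
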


\begin{proof}
The indeterminacy locus of $v_2^{-1}$ is simply the closed subset of
matrices in $X(J_n)$ whose bottom row (and therefore right-most
column) is zero. In other words, $Z_2$ is defined by linear
polynomials. The ideal of these polynomials is radical, and therefore the scheme $Z_2$ is reduced.
For $n\geq 4$, one sees that $Z_2$ is isomorphic to $X(J_{n-1})$. For $n=3$, by considering the matrix equation $x^2=0$, we see that the base locus of $Z_2$ is the quadric defined by $\phi \otimes \langle b_1 \rangle \perp \langle b_2 \rangle =0$. We will define $X(J_2)$ to be this quadric.
\end{proof}

\stepcounter{thm}
\subsubsection{The chain between two quadrics} \label{ChainSect}

The Sarkisov program \cite{Co94} predicts that any birational map between two Mori fibre spaces $X$ and $Y$ factors into a chain of elementary links between intermediate Mori fibre spaces. An example of such a link (of type II \cite[3.4.2]{Co94}) would be $X \leftarrow W \rightarrow V$ where both morphisms are blow ups of smooth subvarieties, and $X$ and $V$ are projective homogeneous varieties with Picard number 1 (and hence Mori fibre spaces).

\begin{thm} \label{SarkisovLink}
For $r\neq 1$ or $C$ non-split, the birational map $v_2$ from $Q(J,u)$ to $X(J)$ is an elementary link of type II. 
\end{thm}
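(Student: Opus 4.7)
The plan is to read off the defining conditions of a type II Sarkisov link from the structural results already assembled in this section. What we must produce is a diagram $Q(J,u) \leftarrow W \rightarrow X(J)$ in which both arrows are blow-ups along smooth subvarieties, together with the verification that the two endpoints are Mori fibre spaces, i.e.\ have Picard number $1$ over $k$.

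First I would build the diagram. Proposition \ref{Blowupscheme} applied to the birational map $v_2$ and its inverse produces a variety $W$ that is simultaneously the blow-up of $Q(J,u)$ along the reduced base scheme of $v_2$ and the blow-up of $X(J)$ along the reduced base scheme of $v_2^{-1}$. By Corollary \ref{B1smooth} the first centre is the variety $Z_1$, and by Lemma \ref{B2smooth} the second is $X(J_{n-1})$. Both centres are smooth: $Z_1$ because it is homogeneous under $\textup{Aut}(J,u)$ by Theorem \ref{B1orbit}, and $X(J_{n-1})$ because it is projective homogeneous by Theorem \ref{Aut(J)-orbits} (or a smooth quadric in the boundary case $n=3$, as noted in the proof of Lemma \ref{B2smooth}). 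Hence both morphisms in the diagram are blow-ups along smooth subvarieties.

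Next I would verify the Picard numbers. The variety $Q(J,u)$ is a smooth projective quadric, and in our range of parameters one has $\textup{Pic}(Q(J,u)) \cong \mathbb{Z}$. For $X(J)$, Theorem \ref{Aut(J)-orbits} identifies $\overline{X(J)} \cong G/P_\theta$. When $r \in \{0,2,3\}$ (apart from the degenerate $r=0,n=4$ subcase, where $v_2$ is already an isomorphism because $Z_1=\emptyset$), $\theta$ is a single node of a connected Dynkin diagram, so the Picard rank over $\bar k$, and hence over $k$, equals $1$. The critical case is $r=1$ with $C$ non-split: then $\theta = \{1,n-1\}$ and $\textup{Pic}(\overline{X(J)})$ has rank $2$, but the action of $\textup{Gal}(\bar k / k)$ factors through $\textup{Gal}(k(\sqrt{a_1})/k)$, whose non-trivial element acts on the Dynkin diagram $A_{n-1}$ by the opposition involution swapping $\alpha_1$ with $\alpha_{n-1}$, so the Galois-invariant subgroup $\textup{Pic}(X(J)) = \textup{Pic}(\overline{X(J)})^{\textup{Gal}}$ has rank $1$.

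Combining the diagram with the Picard-rank verification and the definition of a type II link recalled at the start of \ref{ChainSect} completes the proof. The principal obstacle is the Picard-rank computation in the $r=1$ case: the hypothesis that $C$ is non-split is precisely what forces a non-trivial Galois action on the two marked Dynkin nodes, collapsing the geometric rank $2$ down to the required rank $1$. When $C$ is split this Galois action is trivial, $X(J)$ has Picard rank $2$ over $k$, it is not a Mori fibre space, and the statement genuinely fails, explaining the exclusion in the hypothesis.
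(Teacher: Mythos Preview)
Your argument is correct, but you and the paper compute the Picard number of $X(J)$ by different routes. The paper never touches the Dynkin diagram or Galois descent for $\textup{Pic}$: instead it invokes Remark~\ref{B1irreducible} to see that under the hypothesis $Z_1$ is \emph{irreducible}, and then observes that blowing up a smooth irreducible centre raises the Picard number by exactly one while blowing down lowers it by one; since $Q(J,u)$ has Picard number $1$ and $X(J_{n-1})$ is irreducible, the chain $Q(J,u)\leftarrow W\rightarrow X(J)$ forces $X(J)$ to have Picard number $1$ as well. Your approach computes the same number directly from the identification $\overline{X(J)}\cong G/P_\theta$ in Theorem~\ref{Aut(J)-orbits}, splitting into the case $|\theta|=1$ (rank $1$ already over $\bar k$) and the case $r=1$, $\theta=\{1,n-1\}$, where the non-split quadratic \'etale algebra forces the Galois group to swap the two end nodes of $A_{n-1}$ and cuts the invariant rank to $1$.

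The paper's route is shorter because the irreducibility of $Z_1$ is already on the table, and it avoids any appeal to the structure of $\textup{Pic}(G/P)$ over non-closed fields. Your route is more transparent about \emph{why} the hypothesis ``$r\neq 1$ or $C$ non-split'' is exactly the right one: it is precisely the condition that makes the Galois action on the marked Dynkin nodes have a rank-$1$ invariant lattice. One small point: the equality $\textup{Pic}(X(J))=\textup{Pic}(\overline{X(J)})^{\textup{Gal}}$ can fail for twisted flag varieties by a finite Brauer obstruction, but since you only need the \emph{rank} of the Picard group, the injection $\textup{Pic}(X(J))\hookrightarrow\textup{Pic}(\overline{X(J)})^{\textup{Gal}}$ together with the existence of an ample class already gives rank exactly $1$.
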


\begin{proof}
We have that $Z_1$ is irreducible (see Remark \ref{B1irreducible}). The blow up of an irreducible smooth subscheme increases the Picard number by 1, and a blow down decreases it by 1. So in this situation, by Lemma \ref{B2smooth} and Lemma \ref{B1smooth} we see that $X(J)$ has Picard number 1. So by Prop.\  \ref{Blowupscheme} we have that $v_2$ is a blow up of a smooth subvariety followed by a blow down to a smooth subvariety, and therefore it is an elementary link of type II. 
\end{proof}

Let $b'=\langle b'_1, \cdots, b'_n \rangle$, and $q'=\phi \otimes \langle b'_1, \cdots, b'_{n-1} \rangle \perp \langle b'_n \rangle$. Then Totaro's Prop.\ \ref{TotaroBirat} states that if $\phi \otimes b \cong \phi \otimes b'$, then the quadrics defined by $q$ and $q'$ are birational. By defining the Jordan algebra $J'$ using $\phi$ and $b'$, we have a birational map $v_2'$ from $Q(J',u')$ to $X(J')$.

\begin{proof}[Proof of Thm. \ref{SarkisovThm}] If $\phi \otimes b \cong \phi \otimes b'$, then the Jordan algebras $J \cong J'$ are isomorphic as algebras (\cite[Prop.\ 4.2, p.\ 43]{KMRT98}, \cite[Ch.\ V.7, p.\ 210]{Ja68}), and therefore the varieties $X(J) \cong X(J')$ are also isomorphic. So, as noted in Remark \ref{RemarkAltTotaro}, $Q(J,u)$ is birational to $Q(J',u')$, and moreover by Thm.\ \ref{SarkisovLink} this map is the composition of two elementary links, with intermediate variety $X(J)$. Notice that if $C$ is a split composition algebra (equivalently, $\phi$ is hyperbolic) then $Q(J,u)$ and $Q(J',u')$ are already isomorphic. \end{proof} 

\begin{blank} \textbf{Transposition maps.} Now we will explicitly factor the birational maps of Roussey and Totaro, which in general have more than two elementary links. The most basic case they consider, though, is that of \textit{transposition}. This corresponds to finding a birational map between quadrics $q$ and $q'$, where $b'_i=b_i$ for $1\leq i \leq n-2$, and $b'_{n-1} = b_n$, $b'_n=b_{n-1}$. So $b$ and $b'$ differ by transposing the last two entries. Totaro proves Prop.\ \ref{TotaroBirat} by finding a suitable chain of such transposition maps.
\end{blank}

\begin{prop} \label{TranspositionMaps}
For $r=0,1,2$ and $n\geq 3$, and if $r=3$ then $n=3$, Totaro's transposition map factors as the composite of two elementary links.
\end{prop}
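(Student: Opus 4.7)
The plan is to realize Totaro's transposition map as the composite
\[ Q(J,u) \stackrel{v_2}{\dashrightarrow} X(J) \cong X(J') \stackrel{(v'_2)^{-1}}{\dashrightarrow} Q(J',u'), \]
where $J' = \textup{Sym}(M_n(C), \sigma_{b'})$, $u' = E_{n-1,n-1}$, and $v'_2$ is the analog of $v_2$ for $J'$ with distinguished idempotent $u'$.

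First I would identify the two intermediate varieties $X(J)$ and $X(J')$. Recall from Section \ref{JordanAlgSect} that the isomorphism class of $\textup{Sym}(M_n(C), \sigma_b)$ depends only on the isometry class of $b$ and not on the chosen diagonalization. Since $b$ and $b'$ differ only by a transposition of diagonal entries they are isometric, hence $J \cong J'$ as Jordan algebras, and this isomorphism induces a canonical identification $X(J) \cong X(J')$ providing the middle arrow of the composite.

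Next, provided $r \neq 1$ or $\phi$ is non-hyperbolic (equivalently $C$ is non-split), Theorem \ref{SarkisovLink} asserts that both $v_2$ and $v'_2$ are elementary links of type II, and hence so is $(v'_2)^{-1}$. The composite above therefore exhibits a birational map $Q(J,u) \dashrightarrow Q(J',u')$ as the product of two elementary links; by Prop. \ref{TotaroBirat} this is a representative of Totaro's transposition birational equivalence, since the birational class of the quadric depends only on $\phi$ and $\phi \otimes b$. This disposes of the cases $r = 0, 2$ for all $n \geq 3$, the case $r = 3$ with $n = 3$, and the case $r = 1$ with $\phi$ non-hyperbolic.

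The main obstacle is the remaining case $r = 1$ with $\phi$ hyperbolic, where by Remark \ref{B1irreducible} the base locus $Z_1 \cong \mathbb{P}^{n-2} \sqcup \mathbb{P}^{n-2}$ is reducible, so blowing it up raises the Picard number by two and $v_2$ is not a type II link through $X(J)$. In this situation, however, the quadrics $Q(J,u)$ and $Q(J',u')$ are already isomorphic as projective varieties (both forms are similar to $H^{n-1} \perp \langle 1 \rangle$, since $H \cong \lambda H$ and $\langle b_n\rangle$ and $\langle b_{n-1}\rangle$ are always similar up to a scalar), so Totaro's transposition is itself an isomorphism. It then factors trivially as a pair of mutually inverse elementary links through an auxiliary Mori fibre space, in agreement with the weaker statement of Theorem \ref{SarkisovThm} in this case.
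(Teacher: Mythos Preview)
Your setup is essentially the same as the paper's (the paper works inside a single $J$ with two idempotents $u=E_{n,n}$ and $u'=E_{n-1,n-1}$, which is equivalent to your two isomorphic algebras $J$ and $J'$), and your invocation of Theorem~\ref{SarkisovLink} for the two links is correct. However, there is a genuine gap: the proposition concerns \emph{Totaro's transposition map}, a specific birational map, not merely the existence of some birational map between $Q(J,u)$ and $Q(J',u')$. Your sentence ``by Prop.~\ref{TotaroBirat} this is a representative of Totaro's transposition birational equivalence'' does not establish this; Prop.~\ref{TotaroBirat} only tells you the two quadrics are abstractly birational, and there are many birational self-maps of a quadric, so identifying source and target does not pin down the map.

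The paper closes this gap by an explicit computation: it sends a general point $[c_1,\dots,c_n]\in Q(J,u)$ under $v_2$ to the matrix $[b_ic_i\bar c_j]\in X(J)$, then reads off the $(n-1)$-th row to obtain
\[
[b_{n-1}c_{n-1}\bar c_1,\dots,b_{n-1}c_{n-1}\bar c_{n-1},b_{n-1}c_{n-1}\bar c_n]\in Q(J,u'),
\]
and, after swapping the last two coordinates, recognizes this as precisely the map in \cite[Lemma~5.1]{To08} for the product $x*y:=x\bar y$ on $C$. Without this identification you have only shown that \emph{some} birational map between the two quadrics factors as two links, not that Totaro's does. Your treatment of the $r=1$ hyperbolic case is also hand-wavy (``factors trivially as a pair of mutually inverse elementary links through an auxiliary Mori fibre space'' names no such space and proves nothing), though admittedly the paper is terse here as well.
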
 

\begin{proof}
Let $q$ and $q'$ be as above, and let $J=\textup{Sym}(M_n(C_{\phi}), \sigma_b)$. Then the quadric $(q=0)=Q(J,u)$ is defined using the idempotent $u=\textup{diag}(0, \cdots 0,1)\in J$ (see \ref{DefnQ(J,u)}). General rational points on this quadric are elements in $\mathbb{P}(C^{n-1}\times k)$ such that $v_2([c_1, \cdots, c_n])\in \mathbb{P}J$ has trace zero. Here $c_i\in C$ for $i\neq n$, and $c_n \in k$. The inverse birational map $v_2^{-1}$ simply takes the $n^{th}$ row of the matrix in $J$.

Then the quadric for $(q'=0)=Q(J,u')$ can be defined using the idempotent $u'=\textup{diag}(0,\cdots,1,0)\in J$. General rational points on this quadric are elements in $\mathbb{P}(C^{n-2}\times k \times C)$ such that $v'_2([c'_1,\cdots, c'_n]) \in \mathbb{P}J$ has trace zero, where we use the same Jordan algebra $J$. Here $c'_i\in C$ for $i\neq n-1$, and $c'_{n-1} \in k$. The inverse birational map $(v'_2)^{-1}$ takes the $n-1^{th}$ row of the matrix in $J$.

So the composition $(v'_2)^{-1} \circ v_2$ defines a birational map from $Q(J,u)$ to $Q(J,u')$. From Thm.\ \ref{SarkisovLink} this is the composite of two elementary links. So it remains to show this composite is the same as Totaro's transposition map.

To see this, consider the map $(v'_2)^{-1} \circ v_2$ over $\bar{k}$, and observe where it sends a general point from $Q(J,u)$. Recall that $v_2$ sends $[c_1,\cdots,c_n]$ to the matrix $[b_ic_i\overline{c_j}]\in X(J)$, and then taking the $n-1^{th}$ row of this matrix gives us $$[b_{n-1}c_{n-1}\overline{c_1}, \cdots, b_{n-1}c_{n-1}\overline{c_{n-1}}, b_{n-1}c_{n-1}\overline{c_n}] \in Q(J,u') \subset \mathbb{P}(C^{n-2} \times \bar{k} \times C).$$ After using the isomorphism $\mathbb{P}(C^{n-2}\times k \times C) \cong \mathbb{P}(C^{n-1} \times k)$ to swap the last two coordinates, we can now recognize that this is exactly a map from \cite[Lemma 5.1]{To08}, where the ``multiplication'' of elements in $C$, is $x *y:=x\bar{y}$. 
\end{proof}

\begin{remark}
We may also view this chain of birational maps as a ``weak factorization" in the sense of \cite{AKMW02}. They prove that any birational map between smooth projective varieties can be factored into a sequence of blow ups and blow downs of smooth subvarieties. But a chain of Sarkisov links (of type II) is stronger, because then each blow up is immediately followed by a blow down, and the intermediate varieties are Mori fibre spaces.
\end{remark}

\section{Motives} \label{MotiveSect}

For a smooth complete scheme $X$ defined over $k$, we will denote the Chow motive of $X$ with coefficients in a ring $\Lambda$ by $\mathcal{M}(X;\Lambda)$, following \cite{EKM08} (see also \cite{Vi04}, \cite{Ma68}). We will briefly recall the definition of the category of graded Chow motives with coefficients in $\Lambda$. 

Let us define the category $\mathcal{C}(k,\Lambda)$. The objects will be pairs $(X,i)$ for $X$ a smooth complete scheme over $k$, and $i \in \mathbb{Z}$, and the morphisms will be \textit{correspondences}: $$\textup{Hom}_{\mathcal{C}(k,\Lambda)}((X,i),(Y,j)) = \bigsqcup_m \textup{CH}_{\textup{dim}(X_m)+i-j}(X_m \times_k Y, \Lambda).$$ Here $\{X_m\}$ is the set of irreducible components of $X$.
If $f:X \to Y$ is a morphism of $k$-schemes, then the graph of $f$ is an element of $\textup{Hom}_{\mathcal{C}(k,\Lambda)}((X,0),(Y,0))$. There is a natural composition on correspondences that generalizes the composition of morphisms of schemes.

We denote the \textit{additive completion} of this pre-additive category by $CR(k,\Lambda)$. Its objects are finite direct sums of objects in $\mathcal{C}(k,\Lambda)$, and the morphisms are matrices of morphisms in $\mathcal{C}(k,\Lambda)$. Then $CR(k,\Lambda)$ is the category of \textit{graded correspondences} over $k$ with coefficients in $\Lambda$. 

Finally, we let $CM(k,\Lambda)$ be the \textit{idempotent completion} of $CR(k,\Lambda)$. Here the objects are pairs $(A,e)$, where $A$ is an object in $CR(k,\Lambda)$ and $e \in \textup{Hom}_{CR(k,\Lambda)} (A,A)$ such that $e\circ e=e$. Then the morphisms are $$\textup{Hom}_{CM(k,\Lambda)}((A,e),(B,f))=f \circ \textup{Hom}_{CR(k,\Lambda)}(A,B) \circ e.$$ This is the category of \textup{graded Chow motives} over $k$ with coefficients in $\Lambda$. For any smooth complete scheme $X$ over $k$, we denote $\mathcal{M}(X)=((X,0),id_X)$ its Chow motive, and $\mathcal{M}(X)\{i\}=((X,i),id_X)$ its $i^{th}$ Tate twist. Any object in $CM(k,\Lambda)$ is the direct summand of a finite sum of motives $\mathcal{M}(X)\{i\}$.

In this section we will describe direct sum motivic decompositions of $Q(J,u)$, $Z_1$ and finally $X(J)$. A non-degenerate quadratic form $q$ of dimension $\geq 2$ defines a smooth projective quadric $Q$, and we will sometimes write $\mathcal{M}(q)=\mathcal{M}(Q)$.

\stepcounter{thm}
\subsubsection{Motives of neighbours of multiples of Pfister quadrics} \label{SectMotivesNeighbours}
In this section until Example \ref{MotiveExample} we can assume our base field $k$ is of any characteristic other than 2, and $r\geq 1$ may be arbitrarily large.
Given an $r$-fold Pfister form $\phi$ and an $n$-dimensional
non-degenerate quadratic form $b=\langle b_1, \cdots ,b_n \rangle$ over $k$
we will describe the motivic decomposition of the projective quadric $Q$
defined by $$q= \phi \otimes \langle b_1,\cdots,b_{n-1} \rangle
\perp \langle b_n \rangle.$$ This quadric is dependent on the
choice of diagonalization of $b$. We will use
Vishik's following motivic decomposition of the quadric defined by $\phi
\otimes b$.

\begin{thm} (\cite[6.1]{Vi04}) \label{Motiveq_h} \\
For $n\geq 1$, there exists a motive $F^r_n$ such
that
\[\mathcal{M}(\phi \otimes b) = \bigoplus_{i=0}^{2^r-1}F^r_n\{i\} \oplus \left\{
\begin{array}{l l}
  \emptyset & \quad \mbox{if $n$ is even}\\
  \mathcal{M}(\phi)\{2^{r-1}(n-1)\} & \quad \mbox{if $n$ is odd.}\\
\end{array} \right. \]
\end{thm}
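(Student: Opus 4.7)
The plan is to prove this by induction on $n$, exploiting the special structure of Pfister multiples. For the base case $n=1$, Pfister forms are round, so $\phi \otimes \langle b_1\rangle \cong b_1 \phi \cong \phi$, and the claim holds trivially with the convention $F^r_1 := 0$.

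For the inductive step ($n \geq 2$) I would use the orthogonal decomposition
\[
\phi \otimes b \cong (\phi \otimes b'') \perp (\phi \otimes \langle b_{n-1}, b_n\rangle),
\]
where $b'' = \langle b_1,\ldots,b_{n-2}\rangle$. The second summand, after scaling by $b_{n-1}$, is the $(r+1)$-fold Pfister form $\langle\langle a_1,\ldots,a_r,-b_{n-1}b_n\rangle\rangle$; its associated quadric is therefore a Pfister quadric whose motive decomposes by Rost's theorem into $2^r$ Tate twists of a single $(r+1)$-fold Rost motive. Geometrically, $Q_{\phi \otimes b''}$ embeds in $Q := Q_{\phi \otimes b}$ as a codimension-$2^{r+1}$ linear section, and I would combine the inductive decomposition of $\mathcal{M}(Q_{\phi\otimes b''})$ with Vishik's motivic gluing techniques for such quadric pairs to assemble the full decomposition, two entries of $b$ at a time. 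The parity distinction then emerges naturally: for $n$ even, all entries pair up and produce $2^r$ twists of a single ``Pfister-type'' summand $F^r_n$; for $n$ odd, one entry $\langle b_n\rangle$ remains unpaired, and its contribution $b_n \phi \cong \phi$ yields the extra summand $\mathcal{M}(\phi)\{2^{r-1}(n-1)\}$, with the Tate shift forced by the total dimension of the previously packaged summands.

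The hardest step will be the Krull-Schmidt verification: that the summand $F^r_n$ is genuinely indecomposable and that its $2^r$ Tate twists are distinct, rather than recombining into smaller indecomposables. This requires Rost nilpotence together with a cycle-theoretic analysis on $Q \times Q$. Over the function field $k(\phi)$ the Pfister form $\phi$ becomes hyperbolic, so $\phi \otimes b$ splits entirely and $\mathcal{M}(Q)_{k(\phi)}$ is a sum of Tate motives; one must then verify that the motivic idempotents definable over $k$ cluster these Tate motives into Tate twists of a single indecomposable motive $F^r_n$, together with the leftover $\mathcal{M}(\phi)$ summand in the odd case. This descent of rational cycles from $\bar k$ to $k$ is the technical heart of \cite{Vi04} and I expect it to be the most delicate part of the argument.
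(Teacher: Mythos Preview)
The paper does not give its own proof of this statement: it is quoted verbatim from \cite[6.1]{Vi04}, and the only additional content is the remark that Vishik writes $F_{\phi}(\mathcal{M}(b))$ for $F^r_n$ and calls it a ``higher form'' of $\mathcal{M}(b)$. So there is no in-paper argument to compare your proposal against; the result is imported wholesale.

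That said, your proposal contains a genuine misconception that would derail any attempt to carry it out. You frame the ``hardest step'' as a Krull--Schmidt verification that $F^r_n$ is \emph{indecomposable}. But $F^r_n$ is not claimed to be indecomposable, and in general it is not: the paper explicitly notes (in the remark following Theorem~\ref{MotiveX(J)}) that even when $\phi$ is anisotropic, ``the motive $F^r_n$ could still be decomposable, depending on the quadratic form $b$.'' The theorem only asserts the \emph{existence} of a motive $F^r_n$ making the displayed decomposition hold; no uniqueness or indecomposability is part of the statement. So the cycle-theoretic descent you describe as the ``technical heart'' is aimed at proving something both false and unnecessary.

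Your inductive strategy --- peeling off two entries of $b$ at a time and using that $\phi\otimes\langle b_{n-1},b_n\rangle$ is similar to an $(r+1)$-fold Pfister form --- is a reasonable heuristic for why the parity of $n$ matters, but it does not by itself produce the required motivic summand. Vishik's actual construction defines $F^r_n = F_{\phi}(\mathcal{M}(b))$ as the image of an explicit idempotent in the endomorphism ring of $\mathcal{M}(\phi\otimes b)$, built from his general machinery for motives of quadrics; the decomposition then follows from the properties of this functor rather than from an inductive gluing along linear sections. If you want to reconstruct the argument, that is where to look.
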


Vishik uses the notation $F_{\phi}(\mathcal{M}(b))$ for $F^r_n$, and
calls it a \textit{higher form of} $\mathcal{M}(b)$. It only
depends on the isometry classes of $\phi$ and $b$. 

If $\phi$ is anisotropic, Rost defined an indecomposable motive $R^r$ such that $\mathcal{M}(\phi)$ is the direct sum of Tate twists of $R^r$. This is called the Rost motive of $\phi$. If $\phi$ is split, then this motive is no longer indecomposable, but we will still call $R^r = \mathbb{Z} \oplus \mathbb{Z}\{2^{r-1}-1\}$ the Rost motive. 
In fact, $F^r_2$ is just the Rost
motive of $\phi \otimes b$ (which is similar to a Pfister form).
Also note that $F^r_1=0$.

In particular, for $n\geq 1$, by counting Tate motives one sees that $$F^r_n|_{\bar{k}} = \bigoplus_{i=0}^{\lfloor
\frac{n}{2} \rfloor-1} (\mathbb{Z}\{2^ri\}\oplus
\mathbb{Z}\{2^r(n-1)-2^ri-1\}).$$ So the summand has $2\lfloor
\frac{n}{2} \rfloor$ Tate motives, which is the same number that
$\mathcal{M}(b)|_{\bar{k}}$ has.

A summand $M$ is said to \textit{start at $d$} if $d=\textup{min}\{i|\mathbb{Z}\{i\} \textup{ is a summand of }M_{\bar{k}}\}$. Similarly, a summand $M$ \textit{ends at $d$} if $d=\textup{max}\{i|\mathbb{Z}\{i\} \textup{ is a summand of }M_{\bar{k}}\}$. We will use the following theorem of Vishik. Here $i_W(q)$ denotes the Witt index of the quadratic form $q$. This is the number of hyperbolic plane summands in $q$.

\begin{thm}(\cite[4.15]{Vi04}) \label{Vish1}
Let $P,Q$ be smooth projective quadrics over $k$, and $d\geq 0$.
Assume that for every field extension $E/k$, we have that
$$i_W(p|_E)
>d \Leftrightarrow i_W(q|_E)> m.$$ Then the indecomposible summand
in $\mathcal{M}(P)$ starting at $d$ is isomorphic to the (Tate
twisted) indecomposible summand in $\mathcal{M}(Q)$ starting at
$m$.
\end{thm}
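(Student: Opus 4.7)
The plan is to work over the algebraic closure to identify which Tate twists comprise each indecomposable summand, and then use the Rost nilpotence principle to descend a correspondence from $\bar{k}$ to $k$. Over $\bar{k}$, the motive of any smooth projective quadric decomposes completely into Tate twists $\mathbb{Z}\{i\}$; any indecomposable summand of $\mathcal{M}(P)$ defined over $k$ pulls back to a specific collection of such twists that are ``bound together'' arithmetically, and the summand starting at $d$ is the unique one containing $\mathbb{Z}\{d\}$. The key reinterpretation is that $i_W(p|_E) > d$ is equivalent to $\mathbb{Z}\{d\}$ splitting off as a direct summand of $\mathcal{M}(P)|_E$ (and likewise $i_W(q|_E) > m$ corresponds to $\mathbb{Z}\{m\}$ splitting off of $\mathcal{M}(Q)|_E$), so the hypothesis says precisely that these two Tate splittings occur over exactly the same field extensions of $k$.

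Next, I would use this equivalence to construct mutually inverse correspondences between the two summands. Let $E/k$ be a field extension engineered so that $\mathbb{Z}\{d\}$ splits off of $\mathcal{M}(P)|_E$ --- for instance, a generic point in the splitting tower of $P$ at the relevant height. By the biconditional hypothesis, $\mathbb{Z}\{m\}$ also splits off of $\mathcal{M}(Q)|_E$, producing an $E$-rational cycle on $(P \times Q)_E$ realising a correspondence $\mathcal{M}(P)|_E \to \mathcal{M}(Q)\{m-d\}|_E$ of the correct codimension. Arguing inductively along the splitting tower, using the biconditional at each intermediate field to propagate $k$-rationality, one lifts this correspondence to a class $\alpha \in \textup{Hom}(\mathcal{M}(P), \mathcal{M}(Q)\{m-d\})$ whose base change to $\bar{k}$ restricts to the identity on $\mathbb{Z}\{d\}$ inside the summand starting at $d$. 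Symmetrically, the other direction of the biconditional produces $\beta$ in the opposite direction.

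Finally, the Rost nilpotence principle for quadrics says that any endomorphism of $\mathcal{M}(P)$ whose base change to $\bar{k}$ acts as the identity on a direct summand must itself act as the identity on that summand, modulo a nilpotent correction that can be absorbed into the projector. Applying this to $\beta \circ \alpha$ and $\alpha \circ \beta$ promotes the pair $(\alpha, \beta)$ to mutually inverse isomorphisms between the two indecomposable summands, which is the desired conclusion. The main obstacle is the middle step: the cycles are a priori defined only over $E$, and promoting them to genuine classes in $\textup{CH}(P \times_k Q)$ requires Vishik's detailed analysis of Chow groups of products of quadrics together with a careful book-keeping of the biconditional at every field in the splitting tower. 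This descent --- essentially a check that the ``elementary discrete invariant'' of $P$ at height $d$ agrees with that of $Q$ at height $m$ --- is the crux of the argument and the reason Vishik's full framework (rather than just Rost nilpotence alone) is needed.
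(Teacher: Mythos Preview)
The paper does not prove this statement at all: Theorem~\ref{Vish1} is simply quoted from \cite[4.15]{Vi04} as a black box and then applied in the proof of Theorem~\ref{Q(J,u)motive}. So there is no proof in the paper to compare your proposal against.

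That said, your sketch is a reasonable outline of the circle of ideas Vishik uses to prove results of this type --- the interpretation of $i_W(p|_E)>d$ as the splitting off of $\mathbb{Z}\{d\}$, the construction of correspondences via generic points in the splitting pattern, and the use of Rost nilpotence to descend --- and you are right that the heart of the matter is the descent of cycles from intermediate fields in the splitting tower back to $k$, which is where Vishik's machinery on Chow groups of products of quadrics is genuinely required. For the purposes of this paper, though, none of that is needed: the statement is invoked, not reproved.
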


With this theorem, it becomes straight forward to prove the
following motivic decomposition (Thm.\ \ref{Q(J,u)motive}), by translating it into some
elementary facts about multiples of Pfister forms. First we will
state two lemmas for convenience.

\begin{lem} \label{Vish2}
Let $\phi$ be an $r$-fold Pfister form ($r \geq 1$) and let $b$ be an
$n$-dimensional non-degenerate quadratic form ($n \geq 2$). For any $0\leq d\leq \lfloor \frac{n}{2} \rfloor -1$,
we have $i_W(\phi \otimes b)>2^rd$ implies $i_W(\phi \otimes b)>2^r(d+1)-1$.
\end{lem}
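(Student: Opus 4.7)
Plan: The crux of the lemma is the classical divisibility statement that when $\phi$ is a Pfister form, $i_W(\phi \otimes b)$ always lies in $2^r\mathbb{Z}_{\geq 0}$. Once this is established, the implication is immediate: $i_W > 2^r d$ together with $2^r \mid i_W$ forces $i_W \geq 2^r(d+1) > 2^r(d+1)-1$. The hyperbolic case is routine: then $\phi \otimes b$ is hyperbolic of dimension $2^r n$, so $i_W(\phi \otimes b) = 2^{r-1} n$, and the restriction $d + 1 \leq \lfloor n/2 \rfloor$ gives $2^{r-1}n \geq 2^r(d+1)$ in both parities of $n$ without even using the hypothesis $i_W > 2^r d$.

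The main case is $\phi$ anisotropic (a Pfister form being either anisotropic or hyperbolic). I would prove by induction on $n$ that $i_W(\phi \otimes b) \in 2^r\mathbb{Z}$. Suppose $\phi \otimes b$ is isotropic, with witness $(v_1, \ldots, v_n)$ in $n$ copies of the space of $\phi$, and set $I = \{i : v_i \neq 0\}$ and $s_i = \phi(v_i)$. Anisotropy of $\phi$ forces $s_i \in D(\phi)$ for $i \in I$, and the isotropy condition becomes $\sum_{i\in I} b_i s_i = 0$. Thus the diagonal form $\langle b_i s_i\rangle_{i\in I}$, of dimension $|I| \geq 2$, is itself isotropic, so it admits a Witt splitting $\langle b_i s_i\rangle_{i \in I} \cong \mathbb{H} \perp \psi$ for some non-degenerate $\psi$ of dimension $|I|-2$. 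Tensoring with $\phi$ and invoking the roundness identity $s\phi \cong \phi$ for $s \in D(\phi)$ (so that $b_i s_i \phi \cong b_i\phi$) yields $\phi\otimes\langle b_i\rangle_{i \in I} \cong 2^r\mathbb{H} \perp \phi\otimes\psi$. Combining with the orthogonal summand over the indices outside $I$, I obtain $\phi\otimes b \cong 2^r\mathbb{H} \perp \phi\otimes\tilde b$ for a non-degenerate $\tilde b$ of dimension $n-2$, and the inductive hypothesis finishes the argument.

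The main obstacle is the step above: one might hope to find two indices $i\neq j$ in $I$ for which $\langle b_i s_i, b_j s_j\rangle$ is already hyperbolic, but this is generally false as soon as $|I| \geq 3$. The Witt decomposition of $\langle b_i s_i\rangle_{i\in I}$ requires a genuine change of orthogonal basis, producing new coefficients not directly related to the $b_i$. The argument is rescued by the observation that once we tensor with $\phi$ these new coefficients are absorbed by roundness, $c\phi \cong \phi$ for $c \in D(\phi)$; this is precisely where the Pfister hypothesis on $\phi$ enters in an essential way.
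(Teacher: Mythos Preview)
Your proposal is correct and follows essentially the same approach as the paper: both reduce the lemma to the divisibility fact that $2^r \mid i_W(\phi\otimes b)$ when $\phi$ is anisotropic. The paper simply cites this from \cite[Lemma 6.2]{Vi04} or \cite[Thm.\ 2(c)]{WS77}, whereas you supply a self-contained inductive proof via the roundness of Pfister forms (and also spell out the hyperbolic case); your argument is the standard one underlying those references, so there is no real difference in strategy.
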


\begin{proof}
This follows from the fact that if $\phi$ is anisotropic then $2^r$ divides $i_W(\phi \otimes b)$ \cite[Lemma 6.2]{Vi04} or \cite[Thm.\ 2(c)]{WS77}.
\end{proof}

\begin{lem} \label{Vish3}
If $Q$ is a smooth projective quadric of dimension $N$, then for any $0\leq d\leq N$, an indecomposable summand of $\mathcal{M}(Q)$ starting at $d$ is isomorphic (up to Tate twist) to an indecomposable summand of $\mathcal{M}(Q)$ ending at $N-d$. The same is true for indecomposable summands of $F^r_n$ for any $r\geq 1$ and $n\geq 1$.
\end{lem}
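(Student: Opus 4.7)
The plan is to deduce both statements from Poincar\'e duality together with the Krull--Schmidt property for Chow motives. For a smooth projective $k$-scheme $Y$ of pure dimension $N$ one has a canonical isomorphism $\mathcal{M}(Y)^{\vee} \cong \mathcal{M}(Y)\{-N\}$, where $(-)^\vee$ denotes the categorical dual in $CM(k,\Lambda)$. In particular, for a smooth projective quadric $Q$ of dimension $N$, the motive $\mathcal{M}(Q)$ is self-dual up to the Tate twist $N$, i.e.\ $\mathcal{M}(Q) \cong \mathcal{M}(Q)^{\vee}\{N\}$.

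First I would fix a Krull--Schmidt decomposition of $\mathcal{M}(Q)$ into indecomposable summands (unique up to isomorphism). Dualization preserves indecomposability and commutes with direct sums, so if $M$ is an indecomposable summand of $\mathcal{M}(Q)$, then $M^\vee\{N\}$ is again an indecomposable summand of $\mathcal{M}(Q)$. Since extending scalars gives $M|_{\bar k} = \bigoplus_j \mathbb{Z}\{a_j\}$, we have $M^\vee\{N\}|_{\bar k} = \bigoplus_j \mathbb{Z}\{N-a_j\}$; in particular, if $M$ starts at $d = \min_j a_j$, then $M^\vee\{N\}$ ends at $N - d = \max_j (N - a_j)$. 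This already produces an indecomposable summand of $\mathcal{M}(Q)$ that ends at $N-d$.

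For the ``up to Tate twist'' statement I would invoke the known shape symmetry of indecomposable summands of quadric motives (see Vishik's symmetry results in \cite{Vi04}): the multiset of Tate twists in $M|_{\bar k}$ is symmetric about its midpoint, so $M^\vee$ is isomorphic to $M\{-(d+e)\}$ where $e = \max_j a_j$, and hence $M^\vee\{N\} \cong M\{N-d-e\}$ is a Tate twist of $M$. This is the only genuinely nontrivial input; everything else is formal manipulation with dual objects in $CM(k,\Lambda)$.

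For $F^r_n$, the plan is to transfer self-duality from the ambient quadric $\phi \otimes b$, whose motive has dimension $N = 2^r n - 2$. Using the decomposition in Thm.~\ref{Motiveq_h} together with Poincar\'e duality on $\mathcal{M}(\phi \otimes b)$, one pairs the summands $F^r_n\{i\}$ with $F^r_n\{2^r - 1 - i\}$ (the extra $\mathcal{M}(\phi)\{2^{r-1}(n-1)\}$ term appearing when $n$ is odd is itself self-dual, since $\mathcal{M}(\phi)$ is the motive of the $(2^r-2)$-dimensional Pfister quadric centered appropriately). Comparing first and last Tate twists gives $F^r_n \cong F^r_n^\vee\{2^r(n-1)-1\}$, so $F^r_n$ plays the role of a ``self-dual piece'' with effective dimension $N' = 2^r(n-1) - 1$. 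Repeating the argument of the preceding paragraphs inside $F^r_n$ yields the conclusion with $N$ replaced by $N'$. The main obstacle in both halves is the Tate-twist-isomorphism step, which relies on the symmetry of indecomposable summands of quadric motives rather than on any new motivic input.
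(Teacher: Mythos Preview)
Your approach is essentially the same as the paper's, just unpacked: the paper simply cites \cite[Thm.\ 4.19]{Vi04} for the anisotropic case and reduces the isotropic case via \cite[Prop.\ 2.1]{Vi04}, whereas you spell out the Poincar\'e duality mechanism that underlies Vishik's theorem and then, at the crucial point (self-duality up to Tate twist of each indecomposable summand), invoke exactly the same Vishik symmetry result. So for the quadric part there is no substantive difference.

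One small gap to be aware of in your $F^r_n$ argument: the sentence ``comparing first and last Tate twists gives $F^r_n \cong (F^r_n)^{\vee}\{2^r(n-1)-1\}$'' is not justified by a comparison over $\bar{k}$ alone---two motives with the same Tate realizations over $\bar{k}$ need not be isomorphic over $k$. To extract this isomorphism from the decomposition of $\mathcal{M}(\phi\otimes b)$ you genuinely need a Krull--Schmidt/cancellation argument for direct summands of quadric motives (which is available in Vishik's framework, so the gap is fillable). The paper sidesteps this by appealing directly to Vishik's construction of $F^r_n = F_\phi(\mathcal{M}(b))$: since the functor $F_\phi$ is built so that indecomposable summands of $F^r_n$ correspond, up to controlled Tate twists, to indecomposable summands of $\mathcal{M}(b)$, the statement for $F^r_n$ follows immediately from the already-established quadric case applied to $b$. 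That is a shorter route than transferring self-duality through the ambient quadric $\phi\otimes b$.
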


\begin{proof}
This is proved in \cite[Thm.\ 4.19]{Vi04} for anisotropic $Q$, but it is also true for isotropic $Q$ by using \cite[Prop.\ 2.1]{Vi04} to reduce to the anisotropic case. The statement for the motive $F^r_n$ follows easily from its construction.
\end{proof}

\begin{thm} \label{Q(J,u)motive}
Let $\phi$ be an $r$-fold Pfister form ($r\geq 1$), and for
non-zero $b_i$ and $n\geq 2$ we let $q=\phi \otimes \langle
b_1,\cdots,b_{n-1}\rangle \perp \langle b_n \rangle$ over $k$ of characteristic not $2$. Then we have
the following motivic decomposition. $$\mathcal{M}(q) = F^r_n
\oplus \bigoplus_{i=1}^{2^r-1}F^r_{n-1}\{i\} \oplus \left\{
\begin{array}{l l}
  \emptyset & \mbox{if $n$ is odd}\\
  \bigoplus_{j=1}^{2^{r-1}-1} R^r\{2^{r-1}(n-1)-j\} \hspace{2mm} & \mbox{if $n$ is even.}\\
\end{array} \right. $$
\end{thm}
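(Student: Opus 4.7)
The plan is to apply Vishik's detection theorem (Thm \ref{Vish1}) summand by summand, identifying each indecomposable piece of $\mathcal{M}(q)$ by comparing the Witt-index function $E \mapsto i_W(q|_E)$ against those of three reference quadrics: $p = \phi \otimes b$, $q_1 = \phi \otimes \langle b_1, \ldots, b_{n-1}\rangle$, and the Pfister quadric $\phi$ itself. All three have known motivic decompositions from Thm \ref{Motiveq_h} and Rost's theorem.

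The starting observation is the identity $p = q \perp b_n \phi'$, where $\phi = \langle 1\rangle \perp \phi'$, which realizes $Q$ as a codimension $2^r - 1$ linear section of $P$. Over any extension $E/k$, when $\phi|_E$ is hyperbolic we have $i_W(q|_E) = 2^{r-1}(n-1) = i_W(q_1|_E)$ and $i_W(p|_E) = 2^{r-1}n$; when $\phi|_E$ is anisotropic, Lemma \ref{Vish2} forces $i_W(q_1|_E)$ and $i_W(p|_E)$ to be multiples of $2^r$, and consequently $i_W(q|_E) \in \{i_W(q_1|_E),\, i_W(q_1|_E) + 1\}$.

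I then establish three key Witt-index biconditionals. \textbf{(1)} $i_W(q|_E) > 0 \iff i_W(p|_E) > 2^r - 1$; the delicate reverse implication rests on writing $p|_E = i_W(q_1|_E) \mathbb{H} \perp \phi \otimes (\tau \perp \langle b_n\rangle)$, where $\phi \otimes \tau$ is the anisotropic kernel of $q_1|_E$, combined with the roundness-based fact that $\phi \otimes (\tau \perp \langle b_n\rangle)$ is isotropic if and only if $-b_n$ is represented by $\phi \otimes \tau$ (since $D(\phi)$ is a group, so the scalar $u \in D(\phi|_E)$ arising from a nonzero $\phi(y)$ can be absorbed), which is precisely the condition for $q|_E = q_1|_E \perp \langle b_n\rangle$ to be isotropic. \textbf{(2)} For $1 \leq i \leq 2^r - 1$: $i_W(q|_E) > i \iff i_W(q_1|_E) > i$; since $i_W(q|_E) - i_W(q_1|_E) \in \{0, 1\}$ in the anisotropic case and values are constrained to the blocks $\{2^r d,\, 2^r d + 1\}$, exceeding any threshold $i \in [1, 2^r - 1]$ is equivalent on both sides to reaching $2^r$, and the two Witt indices coincide in the hyperbolic case. \textbf{(3)} When $n$ is even and $1 \leq j \leq 2^{r-1} - 1$: $i_W(q|_E) > 2^{r-1}(n-1) - j \iff i_W(\phi|_E) > 0$; the maximum possible value of $i_W(q|_E)$ when $\phi|_E$ is anisotropic is $2^r \lfloor (n-1)/2 \rfloor + 1 = 2^{r-1}(n-2) + 1$, which is at most $2^{r-1}(n-1) - j$ for $j \leq 2^{r-1} - 1$, while the value $2^{r-1}(n-1)$ attained in the hyperbolic case strictly exceeds this threshold.

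Applying Thm \ref{Vish1} to each biconditional gives the claimed decomposition: the summand starting at $0$ is identified via (1) with the summand of $\mathcal{M}(p)$ starting at $2^r - 1$, namely $F^r_n\{2^r - 1\}$, and after the Tate shift yields $F^r_n$; the summands starting at $i = 1, \ldots, 2^r - 1$ are identified via (2) with the same-shift summands $F^r_{n-1}\{i\}$ of $\mathcal{M}(q_1)$ (these exist by Thm \ref{Motiveq_h} applied to the form $\langle b_1, \ldots, b_{n-1}\rangle$); and for $n$ even, the summands starting at $2^{r-1}(n-1) - j$ for $j = 1, \ldots, 2^{r-1} - 1$ are identified via (3) with the summand of $\mathcal{M}(\phi)$ starting at $0$, which is $R^r$ by Rost's theorem, yielding the Tate twists $R^r\{2^{r-1}(n-1) - j\}$. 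A direct count of Tate motives over $\bar{k}$ using the explicit formula for $F^r_n|_{\bar k}$ from Section \ref{SectMotivesNeighbours} shows the listed summands exhaust the full Tate support $\{0, 1, \ldots, 2^r(n-1) - 1\}$ of $\mathcal{M}(q)|_{\bar k}$, confirming completeness. The main obstacle will be the careful verification of biconditional (1), which requires the precise interplay of Pfister roundness with the $2^r$-divisibility of Witt indices.
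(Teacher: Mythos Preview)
There is a genuine gap. Vishik's Theorem~\ref{Vish1} only identifies the \emph{indecomposable} summand starting at a given position, but the motives $F^r_n$ and $F^r_{n-1}$ are in general \emph{not} indecomposable: they decompose further according to the motivic structure of the quadric of $b$ (resp.\ $b'$). So when you invoke biconditional~(1) to match ``the summand of $\mathcal{M}(q)$ starting at $0$'' with $F^r_n\{2^r-1\}$, what Theorem~\ref{Vish1} actually delivers is only the indecomposable piece of $F^r_n$ that starts at $0$, not the whole of $F^r_n$. The same defect occurs in your use of~(2): you only pick up, for each $i$, the indecomposable piece of $F^r_{n-1}\{i\}$ that starts at $i$. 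Your final Tate count then cannot close, because you have not accounted for the indecomposable summands of $F^r_n$ starting at $2^r d$ for $d\geq 1$, nor those of $F^r_{n-1}\{i\}$ starting at $2^r d+i$ for $d\geq 1$.

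The repair is exactly what the paper's proof does: establish the biconditionals at \emph{every} relevant threshold. For the $F^r_n$ piece one needs, for each $0\le d\le \lfloor n/2\rfloor-1$ and every $E/k$,
\[
i_W(\phi\otimes b|_E)>2^r d \iff i_W(q|_E)>2^r d,
\]
and for each $F^r_{n-1}\{i\}$ piece, for each $0\le d\le \lfloor (n-1)/2\rfloor-1$,
\[
i_W(\phi\otimes b'|_E)>2^r d \iff i_W(q|_E)>2^r d+i.
\]
Combined with Lemma~\ref{Vish3} (to handle the summands starting in the second half), this matches \emph{every} indecomposable constituent of $F^r_n$ and of each $F^r_{n-1}\{i\}$ with a summand of $\mathcal{M}(q)$. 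Your biconditionals (1) and (2) are precisely the $d=0$ cases of these; the arguments you already wrote (codimension of subforms, Lemma~\ref{Vish2}) extend with no new ideas to general $d$. Once this is done, your Step~(3) for the Rost pieces and the Tate count go through as written.
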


\begin{proof}
We will split the proof into steps, including one step for each of the
three summands. We will use the notation $b'=\langle
b_1,\cdots,b_{n-1} \rangle$ and $b=b' \perp \langle b_n \rangle$. Note that we can assume that $\phi$ is anisotropic, because when it is isotropic both sides split into Tate motives, and we get the isomorphism by checking that on the right hand side there is exactly one copy of $\mathbb{Z}\{i\}$ for each $0\leq i \leq 2^r(n-1)$.

\hspace{4mm} \textit{Step 1: The first summand.} To show that $F^r_n$ is isomorphic to a summand of $\mathcal{M}(q)$, we need to show that given an indecomposable summand in $F^r_n$ starting at $d$, then there is an isomoprhic indecomposable summand in $\mathcal{M}(q)$ starting at $d$. In fact, by Lemma \ref{Vish3} it is enough to only consider indecomposable summands starting in the `first half', which is to say starting at $i\leq 2^{r-1}(n-1)$.

Since the only Tate motives in the first half of $F^r_n|_{\bar{k}}$ are $\mathbb{Z}[2^rd]$ for some $0\leq d \leq \lfloor \frac{n}{2} \rfloor -1$, by Thm.\ \ref{Vish1} it is enough to show that
for each such $d$ and $E/k$ field extension we have $i_W(\phi \otimes b|_E) >2^rd$ iff $i_W(\phi \otimes b'
\perp \langle b_n \rangle|_E)>2^rd$.

The ``if" part is clear. So assume
$i_W(\phi \otimes b|_E)>2^rd$. Then by Lemma \ref{Vish2} we
know $i_W(\phi \otimes b|_E)\geq 2^r(d+1)$. So the $2^r(d+1)$-dimensional
totally isotropic subspace must intersect the
$2^r-1$-codimensional subform $\phi \otimes b' \perp \langle b_n
\rangle \subset \phi \otimes b$ in dimension at least $2^rd+1$. In other words, $i_W(\phi \otimes b'
\perp \langle b_n \rangle|_E)>2^rd$.

\hspace{4mm} \textit{Step 2: The second summand.} Fix a $1\leq i \leq 2^r-1$.
As argued in Step 1, we want to show that if $0\leq d \leq \lfloor \frac{n}{2} \rfloor -1$, and if there is an indecomposable summand of $F^r_{n-1}$ starting at $2^rd$, then there is an isomorphic indecomposable summand of $\mathcal{M}(q)$ starting at $2^rd+i$.
By Thm.\ \ref{Vish1} it is enough to show that for any $E/k$ we have $i_W(\phi \otimes b'|_E) >2^rd$ iff $i_W(\phi \otimes b' \perp \langle b_n \rangle|_E)>2^rd+i$.
\begin{align*} i_W(\phi
\otimes b'|_E)>2^rd & \Rightarrow i_W(\phi \otimes b' \perp \langle b_n
\rangle|_E)>2^r(d+1)-1 & \quad \mbox{Lemma \ref{Vish2}} \\ & \Rightarrow
i_W(\phi \otimes b' \perp \langle b_n \rangle|_E)>2^rd+i & \\ &  \Rightarrow i_W(\phi \otimes
b')>2^rd & \quad \mbox{ See below}
\end{align*}

The last implication follows since the $\geq 2^rd+2$ dimensional
totally isotropic subspace must intersect the codimension 1
subform in dimension at least $2^rd+1$. So, by Lemma \ref{Vish3}, any indecomposable summand of $F^r_{n-1}$ has a corresponding summand in $\mathcal{M}(q)$, so we have shown that $F^r_{n-1}\{i\}$ is isomorphic to a summand of $\mathcal{M}(q)$.

\hspace{4mm} \textit{Step 3: The third summand.} Assume $n$ is even. Since the
summand is empty for $r=1$, we can assume $r\geq 2$. Fix an $2^{r-1}(n-2) < i < 2^{r-1}(n-1)$.
\begin{align*} i_W(\phi)>0 & \Rightarrow i_W(\phi)=2^{r-1} & \quad \mbox{Property of Pfister forms} \\
& \Rightarrow i_W(\phi \otimes b' \perp \langle b_n \rangle)> i & \\ &
\Rightarrow i_W(\phi)>0 & \quad \mbox{See below} \end{align*}

For the last implication, we have that the hyperbolic part of $\phi \otimes b' \perp \langle b_n \rangle$
is of dimension $\geq 2^r(n-2)+4$. So the anisotropic part is of dimension $\leq 2^r-2$. So by the Arason-Pfister hauptsatz, $\phi \otimes b'$ is hyperbolic. Now if $\phi$ were anisotropic, then $2\textup{dim}(\phi)$ would divide $\textup{dim} (\phi \otimes b')$ \cite[Thm.\ 2(c)]{WS77}. But this says $2^{r+1} | 2^r(n-1)$, which is impossible for $n$ even. Therefore $\phi$ is isotropic.

To finish Step 3, we use Thm.\ \ref{Vish1} to get the isomorphism
of motivic summands.

\hspace{4mm} \textit{Step 4: Counting Tate motives.} To finish the proof, one needs to show that the summands we
have described in these three steps are all possible
summands. This can easily be checked by counting the Tate motives
over $\bar{k}$. For a visualization of this, see Example \ref{MotiveExample} below. 

We have implicitly used \cite[Cor.\ 4.4]{Vi04} here. Note also that for the $n=2$ case the second summand is zero.
\end{proof}

\begin{ex} \label{MotiveExample}
As an illustration of the counting argument in Step 4 above, consider $r=2$ and $n=4$. Then Thm.\ \ref{Q(J,u)motive} says that $\mathcal{M}(\langle \langle a_1, a_2 \rangle \rangle \otimes \langle b_1, b_2, b_3 \rangle \perp \langle b_4 \rangle)$ has 5 motivic (possibly decomposible) summands in this decomposition. We can visualize this decomposition, as in \cite{Vi04}, with a node for each of the 12 Tate motives over $\bar{k}$, and a line between the nodes if they are in the same summand. Then the motive of the 11-dimensional quadric, $\mathcal{M}(q)$, is as follows, with each summand labelled:

\hspace{30mm} \begin{tikzpicture}
\filldraw (0,0) circle (2pt)
(0.5,0) circle (2pt)
(1,0) circle (2pt)
(1.5,0) circle (2pt)
(2,0) circle (2pt)
(2.5,0) circle (2pt)
(3,0) circle (2pt)
(3.5,0) circle (2pt)
(4,0) circle (2pt)
(4.5,0) circle (2pt)
(5,0) circle (2pt)
(5.5,0) circle (2pt);
\draw (0.5,0) .. controls (1.5,1) and (3,1) .. (4,0);
\draw (1,0) .. controls (2,1) and (3.5,1).. (4.5,0);
\draw (1.5,0) .. controls (2.5,1) and (4,1).. (5,0);
\draw (0,0) .. controls (0.5,-0.5) and (1.5,-0.5) .. (2,0);
\draw (2,0) .. controls (2.5,-0.5) and (3,-0.5) .. (3.5,0);
\draw (3.5,0) .. controls (4,-0.5) and (5,-0.5) .. (5.5,0);
\draw (2.5,0) .. controls (2.66,0.25) and (2.84,0.25) .. (3,0);
\draw (2.75,0.3) node {\footnotesize $R^2\{5\}$};
\draw (2.25,-0.5) node {\footnotesize $F^2_4$};
\draw (1, 0.75) node {\footnotesize $F^2_3\{1\}$};
\draw (2.75, 1) node {\footnotesize $F^2_3\{2\}$};
\draw (4.5, 0.75) node {\footnotesize $F^2_3\{3\}$};
\end{tikzpicture}

\normalsize Notice that these summands might be decomposable, for example if the Pfister form $\langle \langle a_1, a_2 \rangle \rangle$ is split. So this differs slightly from Vishik's diagrams, since he used solid lines to denote indecomposable summands, and dotted lines for possibly decomposable ones.
\end{ex}

\stepcounter{thm}
\subsubsection{The motive of the base locus $Z_1$}

Now we will use our understanding of $Z_1$ from Thm.\ \ref{B1orbit} and its proof, to decompose its motive into the direct sum of Tate twisted Rost motives.

\begin{prop} \label{B1motive}
(1) For $r=1$, we have that $\mathcal{M}(Z_1, \mathbb{Z}/2)\cong
\oplus_{i=0}^{n-1} R^1\{i\}$ \\ (2) For $r=2$, we have that
$\mathcal{M}(Z_1, \mathbb{Z}/2) \cong \oplus_{i=0}^{2n-3}R^2\{i\}.$ \\ (3) For
$r=3$, we have that $\mathcal{M}(Z_1, \mathbb{Z}/2) \cong \oplus_{i=0}^7
R^3\{i\}.$
\end{prop}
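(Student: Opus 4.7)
My plan is to treat the three cases $r=1,2,3$ separately, in each case exhibiting $Z_1$ as a bundle whose base has motive $R^r$ and then applying the projective bundle (or K\"unneth) formula. The driving intuition is that $Z_1$ is a ``twisted projective bundle'' whose twist precisely records the composition algebra $C$, and that this twist is controlled motivically by $R^r$.

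The $r=1$ case is essentially immediate from Remark \ref{B1irreducible}: since $Z_1 \cong \textup{Spec}(k(\sqrt{a_1})) \times_k \mathbb{P}^{n-2}$ and $\mathcal{M}(\textup{Spec}(k(\sqrt{a_1})))$ coincides with the Rost motive $R^1$ of the 1-Pfister form $\langle\langle a_1\rangle\rangle$ (in both the split and the non-split case), the K\"unneth formula for a product with projective space immediately gives the decomposition of $\mathcal{M}(Z_1)$ as Tate twists of $R^1$.

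For $r=2$, I would use the ``common kernel'' description recalled just before Remark \ref{B1irreducible}: a point of $Z_1$ over $\bar k$ is an $(n-1)$-tuple of rank $\leq 1$ matrices in $M_2$ sharing a nonzero kernel vector, and projecting to this shared kernel line defines a morphism $Z_1 \to \textup{SB}(C)$ to the Severi-Brauer variety of the quaternion algebra $C$. I need to upgrade this set-theoretic picture to a Zariski-locally trivial $\mathbb{P}^{2n-3}$-bundle over $k$, realizing $Z_1$ as the projectivization of a natural rank-$2(n-1)$ vector bundle on $\textup{SB}(C)$ built from the tautological subbundle and $C^{n-1}$. Once this is done, $\mathcal{M}(\textup{SB}(C))=R^2$ (the Rost motive of the 2-Pfister norm form) together with the projective bundle formula yields $\mathcal{M}(Z_1)=\bigoplus_{i=0}^{2n-3}R^2\{i\}$.

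For $r=3$, the variety $Z_1$ is a twisted form of the 10-dimensional spinor variety, and the proof of Theorem \ref{B1orbit} shows it is projective homogeneous under $\textup{Aut}(J,u)\cong\textup{Spin}(9)$; in particular it is generically split, since $\textup{Spin}(9)$ splits over the function field of $Z_1$ (equivalently, $C$ splits there). Applying the Petrov-Semenov-Zainoulline decomposition \cite{PSZ08} with $p=2$ gives $\mathcal{M}(Z_1;\mathbb{Z}/2)$ as a sum of Tate twists of the generalized Rost motive $\mathcal{R}_2(\textup{Spin}(9))$, which in this setting one identifies with $R^3$ (the Rost motive of the 3-Pfister norm form $\phi$). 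A count of the 16 Tate summands over $\bar k$, coming from the 16 Schubert cells of the split spinor variety (each $R^3\{i\}$ contributing two), pins the Tate twists down to $i=0,1,\ldots,7$. I expect the main obstacle to be the $r=2$ case: rigorously descending the common-kernel picture over $\bar k$ to a Zariski-locally trivial projective bundle structure defined over $k$, i.e., exhibiting the relevant rank-$2(n-1)$ vector bundle on $\textup{SB}(C)$. In the $r=3$ case the remaining subtle point is matching $\mathcal{R}_2(\textup{Spin}(9))$ with the classical Rost motive $R^3$.
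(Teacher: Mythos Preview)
Your $r=1$ argument matches the paper exactly, and your $r=3$ argument is essentially the paper's: apply \cite[5.17]{PSZ08} to the generically split homogeneous variety $Z_1$ and then identify the resulting building block with $R^3$. The paper makes this last identification precise via \cite[Prop.\ 5.18(iii)]{PSZ08}, using that $\textup{Aut}(J,u)$ splits over the function field of the Pfister quadric for $\phi$ and, conversely, that $\textup{SO}(\phi)$ splits over $k(Z_1)$ (a short zero-divisor argument in $C$). You flag this identification as the subtle point but do not supply it; that is the missing step in your $r=3$ sketch.

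The genuine divergence is at $r=2$. The paper does \emph{not} build a projective bundle over $\textup{SB}(C)$; instead it treats $r=2$ and $r=3$ uniformly through \cite{PSZ08}, noting that $Z_1$ is generically split for $\textup{Aut}(J,u)$ in both cases and then invoking \cite[5.18(iii)]{PSZ08} as above. Your route---realizing $Z_1$ as $\mathbb{P}(\mathcal{I}^{\oplus(n-1)})$ for the tautological ideal bundle $\mathcal{I}$ on the conic $\textup{SB}(C)$ and applying the projective bundle formula with $\mathcal{M}(\textup{SB}(C))=R^2$---is a legitimate and more elementary alternative that even yields the decomposition with $\mathbb{Z}$ coefficients rather than $\mathbb{Z}/2$. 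Its cost is exactly the point you identify: one must check that the ``common kernel'' map $Z_1\to\textup{SB}(C)$ is defined over $k$ and that its fibres are projectivizations of an honest vector bundle (the condition $c_i\bar{c_j}=0$ says precisely that all $c_i$ lie in a common rank-$2$ one-sided ideal of $C$, so this comes down to descending the tautological ideal bundle). The paper's approach trades this geometric verification for the heavier machinery of \cite{PSZ08}, gaining uniformity across $r=2,3$ at the price of working only modulo $2$.
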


\begin{proof}
For $r=1$, it is shown in \cite{Kr07} that $Z_1 \cong \mathbb{P}^{n-2} \times_k \textup{Spec}(k\sqrt{a_1})$.
We know that $\mathcal{M}(\textup{Spec}(k[\sqrt{a_1}]))\cong R^1$, so the result follows because the motive of projective space splits into Tate motives.

We have seen that in all cases $Z_1$ is a smooth scheme that is homogeneous for $\textup{Aut}(J,u)$.
Moreover, for $r=2$ or $3$, we know that $Z_1$ is a \textit{generically split} variety in the sense of \cite{PSZ08}.
So by their theorem \cite[5.17]{PSZ08} we have that $\mathcal{M}(Z_1,\mathbb{Z}/2)$ is isomorphic
to a direct sum of Tate twisted copies of an indecomposable motive $\mathcal{R}_2(\textup{Aut}(J,u))$.

Now let $V$ be the projective quadric defined by the $r$-Pfister form $\phi$, the norm
form of the composition algebra $C$.
It is a homogeneous $\textup{SO}(\phi)$ variety. Since $C$ splits over
the function field $k(V)$, by Jacobson's coordinatization theorem $J$ must also
split over $k(V)$, and therefore so does the group $\textup{Aut}(J,u)$. Furthermore,
over $k(Z_1)$, we have a rational point in $Z_1$. Then for any non-zero coordinate $c_i\in C$ of such
a point, there exists $0\neq y\in C$ such that $c_iy=0$ in $C$.
 But then $\phi(c_i)y=(\bar{c_i}c_i)y=\bar{c_i}(c_iy)=0$, and so $C$ has an isotropic vector, and is therefore split. Therefore $\textup{SO}(\phi)$ splits over $k(Z_1)$.

Now we may apply \cite[Prop.\ 5.18(iii)]{PSZ08} to conclude that $\mathcal{R}_2(\textup{Aut}(J,u)) \cong \mathcal{R}_2(\textup{SO}(\phi))$.
Finally, observe that $\mathcal{R}_2(\textup{SO}(\phi))$ is isomorphic to the Rost motive of $\phi$ (\cite[Last example in 7]{PSZ08}), which is the motive $R^r$.
The proposition can be deduced now by counting the Betti numbers of $Z_1$.
\end{proof}

\stepcounter{thm}
\subsubsection{Motivic decomposition of $X(J)$}

We are ready to decompose the motive $\mathcal{M}(X(J))$ for any
reduced simple Jordan algebra $J$. Recall that $X(J)$ is
a homogeneous space for $\textup{Aut}(J)$ (Lemma
\ref{Aut(J)-orbits}).

\begin{prop} \label{MotiveProp}
Let $r=0,1,2$ or $3$ and $n\geq 3$, and if $r=3$ then $n=3$. We have the following isomorphism of motives with coefficients in $\mathbb{Z}$.
$$\mathcal{M}(Q(J_n,u)) \oplus
(\bigoplus^{d_1-1}_{i=1}\mathcal{M}(Z_1)\{i\}) \cong
\mathcal{M}(X(J_n)) \oplus (\bigoplus_{i=1}^{d_2-1}
\mathcal{M}(X(J_{n-1}))\{i\}).$$ Here $d_i$ are the respective
codimensions of the subschemes $B_i$. In particular, for $r\neq 0$, $d_1=2^{r-1}n-2$ and $d_2=2^r$.
\end{prop}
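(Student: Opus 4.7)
The plan is to apply Manin's motivic blow up formula to both ``legs'' of the common blow up diagram coming from the birational map $v_2 \colon Q(J_n,u) \dashrightarrow X(J_n)$. Recall that if $\pi \colon \widetilde{Y} \to Y$ is the blow up of a smooth projective variety $Y$ along a smooth closed subscheme $Z \subset Y$ of pure codimension $d$, then
$$ \mathcal{M}(\widetilde{Y}) \;\cong\; \mathcal{M}(Y) \;\oplus\; \bigoplus_{i=1}^{d-1} \mathcal{M}(Z)\{i\}. $$
This is the only ``machine'' the proof will need; everything else has already been built in Sections~\ref{SarkisovSect} and earlier.

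By Proposition~\ref{Blowupscheme}, there is a single smooth projective scheme $W$ together with birational morphisms
$$ Q(J_n,u) \;\xleftarrow{\pi_1}\; W \;\xrightarrow{\pi_2}\; X(J_n) $$
such that $\pi_1$ is the blow up of $Q(J_n,u)$ along the reduced scheme of base points of $v_2$, and $\pi_2$ is the blow up of $X(J_n)$ along the reduced scheme of base points of $v_2^{-1}$. By Corollary~\ref{B1smooth} the first of these is the smooth subvariety $Z_1$ (or, in the split $r=1$ case, the smooth but disconnected subscheme $\mathbb{P}^{n-2}\sqcup\mathbb{P}^{n-2}$, to which the blow up formula still applies componentwise), and by Lemma~\ref{B2smooth} the second is the smooth subvariety $X(J_{n-1})$.

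Applying Manin's formula to each blow up and identifying $\mathcal{M}(W)$ from both sides gives
$$ \mathcal{M}(Q(J_n,u)) \;\oplus\; \bigoplus_{i=1}^{d_1-1}\mathcal{M}(Z_1)\{i\} \;\cong\; \mathcal{M}(W) \;\cong\; \mathcal{M}(X(J_n)) \;\oplus\; \bigoplus_{i=1}^{d_2-1}\mathcal{M}(X(J_{n-1}))\{i\}, $$
which is the desired isomorphism. The codimension $d_2$ is immediate: $\dim X(J_n) - \dim X(J_{n-1}) = (2^r(n-1)-1) - (2^r(n-2)-1) = 2^r$. The codimension $d_1$ is read off from the explicit description of $Z_1$ in the corollary to Theorem~\ref{B1orbit} (and the proof of Theorem~\ref{B1orbit} in the $r=3$ case) together with $\dim Q(J_n,u) = 2^r(n-1)-1$.

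There is essentially no obstacle: the work has already been done in establishing smoothness of both indeterminacy loci and in producing the common blow up. The only mild wrinkle worth a sentence in the write-up is the split $r=1$ case, where $Z_1$ is reducible over $k$; one either blows up each connected component in turn, or invokes the blow up formula for a smooth (possibly disconnected) closed subscheme of pure codimension, both of which give the stated formula with $\mathcal{M}(Z_1)$ equal to the sum of the motives of the two components.
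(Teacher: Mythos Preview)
Your proposal is correct and follows essentially the same approach as the paper: both use the common blow up furnished by Section~\ref{BlowupSect} (via Proposition~\ref{Blowupscheme}, Corollary~\ref{B1smooth}, and Lemma~\ref{B2smooth}) and then apply Manin's blow up formula to each leg. Your write-up is simply more explicit, spelling out the formula, the codimension computations, and the harmless disconnected $r=1$ split case, where the paper's proof is a terse two sentences.
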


\begin{proof}
If $n$ is the degree of $J_n$, we have by Section \ref{BlowupSect}
that the blow up of $X(J_n)$ along the smooth subvariety
$X(J_{n-1})$ is isomorphic to the blow up of $Q(J_n,u)$ along the
smooth subscheme $Z_1$. So by applying the blow up formula for
motives \cite[p.463]{Ma68}, we get the above isomorphism.
\end{proof}

\begin{thm} \label{MotiveX(J)}
Let $r=0,1,2$ or $3$, and $n\geq 3$ (and if $r=3$ then $n=3$). And let $J=\textup{Sym}(M_n(C),\sigma_b)$ where $C$ is
a $2^r$-dimensional composition algebra over $k$, and $b=\langle
b_1,\cdots,b_n \rangle$ is a non-degenerate quadratic form over
$k$. Then \\
$(r=0):$ $$\mathcal{M}(X(J)) \cong F^0_n = \mathcal{M}(b),$$ \\
$(r=1):$ $$\mathcal{M}(X(J), \mathbb{Z}/2) \cong F^1_n \oplus
\bigoplus_{j=0}^{\lfloor \frac{n-3}{2} \rfloor} \left(
\bigoplus_{i=1}^{2\lfloor \frac{n}{2} \rfloor} R^1\{i+2j\}
\right),$$ \\
$(r=2):$ $$\mathcal{M}(X(J), \mathbb{Z}/2) \cong F^2_n \oplus
\bigoplus_{j=0}^{\lfloor \frac{n-2}{2} \rfloor} \left(
\bigoplus_{i=1}^{4\lfloor \frac{n-1}{2} \rfloor+1} R^2\{i+4j\}
\right),$$ \\
$(r=3):$ $$\mathcal{M}(X(J), \mathbb{Z}/2) \cong F^3_3 \oplus \bigoplus_{i=1}^{11}
R^3\{i\}.$$
\end{thm}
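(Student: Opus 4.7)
The case $r = 0$ is immediate from Theorem \ref{Aut(J)-orbits}: there $X(J)$ is identified with the projective quadric of the form $b$, and one sets $F^0_n := \mathcal{M}(b)$.

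For $r \in \{1,2,3\}$, the plan is induction on $n$, with base case $n = 3$ (for $r = 3$ only this case is needed). The main tool is Proposition \ref{MotiveProp}, which with $\mathbb{Z}/2$ coefficients gives
\[
\mathcal{M}(X(J_n)) \oplus \bigoplus_{i=1}^{2^r-1} \mathcal{M}(X(J_{n-1}))\{i\}
\;\cong\;
\mathcal{M}(Q(J_n,u)) \oplus \bigoplus_{i=1}^{d_1-1} \mathcal{M}(Z_1)\{i\}.
\]
For $n = 3$, the variety $X(J_2)$ is the quadric $\phi \otimes \langle b_1\rangle \perp \langle b_2 \rangle$, whose motive is given by Theorem \ref{Q(J,u)motive} applied with $n = 2$; for $n \geq 4$, the summand $\mathcal{M}(X(J_{n-1}))$ is provided by the induction hypothesis. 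I substitute these, together with Theorem \ref{Q(J,u)motive} for $\mathcal{M}(Q(J_n,u))$ and Proposition \ref{B1motive} for $\mathcal{M}(Z_1)$, and then solve for $\mathcal{M}(X(J_n))$ by cancelling common summands via Krull--Schmidt for Chow motives with finite coefficients.

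The first cancellation is clean: the summand $\bigoplus_{i=1}^{2^r-1} F^r_{n-1}\{i\}$ of $\mathcal{M}(Q(J_n,u))$ pairs off precisely with the $F^r_{n-1}$-contribution to $\bigoplus_{i=1}^{2^r-1}\mathcal{M}(X(J_{n-1}))\{i\}$, since by the form of the induction hypothesis $F^r_{n-1}$ appears with multiplicity one in $\mathcal{M}(X(J_{n-1}))$. What remains is $F^r_n$, possibly the third summand of Theorem \ref{Q(J,u)motive} when $n$ is even, and a combination of Tate-twisted Rost motives $R^r$. The main obstacle is then a purely combinatorial identity: the $R^r$-terms coming from $\bigoplus_{i=1}^{d_1-1}\mathcal{M}(Z_1)\{i\}$ (a staircase double sum), minus those coming from the induction hypothesis inside $\bigoplus_{i=1}^{2^r-1}\mathcal{M}(X(J_{n-1}))\{i\}$, must telescope to the double sum $\bigoplus_j \bigoplus_i R^r\{i + 2^r j\}$ of the theorem. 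This reduces to checking multiplicities at each Tate shift while carefully tracking the parity of $n$ and the values of the codimensions $d_1$ and $d_2 = 2^r$; the base case $n = 3$ can be verified by hand and anchors the induction.
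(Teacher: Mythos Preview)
Your proposal is correct and follows essentially the same line as the paper's own proof: feed the decompositions of $\mathcal{M}(Q(J_n,u))$ (Theorem~\ref{Q(J,u)motive}), $\mathcal{M}(Z_1)$ (Proposition~\ref{B1motive}), and $\mathcal{M}(X(J_{n-1}))$ (induction, with $X(J_2)$ a quadric) into Proposition~\ref{MotiveProp}, invoke Krull--Schmidt over $\mathbb{Z}/2$ to cancel the $F^r_{n-1}$ terms, and then count the remaining $R^r$ summands. One small imprecision: for $r=0$, Theorem~\ref{Aut(J)-orbits} only identifies $\overline{X(J)}$ as a quadric; the cleanest way to get $\mathcal{M}(X(J))\cong\mathcal{M}(b)$ over $k$ is to note that $Z_1=\emptyset$ and $d_2=1$, so Proposition~\ref{MotiveProp} collapses to $\mathcal{M}(X(J_n))\cong\mathcal{M}(Q(J_n,u))=\mathcal{M}(b)$.
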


\begin{proof}
The motive of $Q(J,u)$ may be decomposed in terms of the motives
$F^r_n$, $F^r_{n-1}$ and $R^r$ (Thm.\ \ref{Q(J,u)motive}). The
motive of $Z_1$ with $\mathbb{Z}/2$ coefficients may be decomposed in terms of $R^r$ (Prop.\
\ref{B1motive}). The subvariety $X(J_2)$ is
isomorphic to the quadric defined by $\phi \otimes \langle b_1 \rangle \perp \langle b_2 \rangle$ (see proof of Lemma \ref{B2smooth}), so we have already decomposed its motive in terms
of $F^r_2$ and $R^r$ (Thm.\ \ref{Q(J,u)motive}).

So the last ingredient we need is the cancellation theorem. It gives conditions for when it
is true that an isomorphism of motives $A \oplus B \cong A \oplus C$ implies an isomorphism
of motives $B \cong C$. 
This does not hold in general; there are counter-examples when $\Lambda = \mathbb{Z}$ \cite[Remark 2.8]{CPSZ06}. But if we take $\Lambda$ to be any field, then the stronger Krull-Schmidt theorem holds, which says that any motivic decomposition into indecomposables is unique \cite[Thm.\ 34]{CM06}\footnote{Although this theorem is only stated for $\Lambda$ a discrete valuation ring, the same proof works for any field.}.

When we put these pieces into the isomorphism from Prop.\ \ref{MotiveProp}, we may proceed by induction on $n$. One sees that we can cancel  the $F^r_{n-1}$ terms in the decomposition, leaving us with the motive $\mathcal{M}(X(J))$ on the right hand side, $F^r_n$ on the left hand side, and  several Tate twisted copies of $R^r$ on both sides. To finish the proof one just needs to count the number of copies of $R^r$ remaining after the cancellation theorem, and verify that the given expressions are correct. We leave this induction argument to the reader. 
\end{proof}

\begin{remark}
When $\phi$ is isotropic, the above motives split. When $\phi$ is anisotropic, $R^r$ is indecomposable, but the motive $F^r_n$ could still be decomposable, depending on the quadratic form $b$.
\end{remark}

\begin{remark} \label{Krashen}
The $r=1$ case of the above theorem may be used to prove Krashen's motivic equivalence \cite[Thm.\ 3.3]{Kr07}. To see this, notice that a $1-$Pfister form $\phi$ defines a quadratic \'etale extension $l/k$, and any hermitian form $h$ over $l/k$ is defined by a quadratic form $b$ over $k$. So in Krashen's notation, $V(h)=X(J)$. Furthermore, his $V(q_h)$ is the projective quadric defined by $\phi \otimes b$, and his $\mathbb{P}_L(N)$ is isomorphic to the base locus $Z_1$. So in the notation of this paper, his motivic equivalence is $$\mathcal{M}(\phi \otimes b) \oplus \bigoplus_{i=1}^{n-2} \mathcal{M}(Z_1)\{i\} \cong \mathcal{M} (X(J)) \oplus \mathcal{M}(X(J)) \{1\}.$$

Since we have motivic decompositions of all of these summands in terms of $F^1_n$ and $R^1$ (see Thm.\ \ref{Motiveq_h}, Prop.\ \ref{B1motive} and Thm.\ \ref{MotiveX(J)}), it is easy to verify his motivic equivalence, at least for $\mathbb{Z}/2$ coefficients.

On the other hand, the $r=1$ case of Thm.\ \ref{MotiveX(J)} follows from Krashen's motivic equivalence, together with the $r=1$ cases of Thm.\ \ref{Motiveq_h} and Prop.\ \ref{B1motive}; this is pointed out in \cite[Thm.\ (C)]{SZ08}.
\end{remark}

\textbf{Acknowledgements.} \hspace{1mm} I would like to thank Burt Totaro for many useful discussions, and for originally noticing the $r=1$, $n=3$ case of Prop.\ \ref{TranspositionMaps}. I would also like to thank Patrick Brosnan, Nikita Semenov and the referee for their comments.

\small Mark L. MacDonald. Department of Mathematics, University of British Columbia, Vancouver, B.C. Canada V6T 1Z2. \texttt{mlm@math.ubc.ca}

\end{document}